\newtheorem{definition}{Definition}[section]
\newtheorem{proposition}{Proposition}[section]
\newtheorem{theorem}{Theorem}[section]
\newtheorem{example}{Example}[section]
\newtheorem{remark}{Remark}[section]
\newtheorem{lemma}{Lemma}[section]
\newtheorem{corollary}{Corollary}[section]
\begin{document}
	\begin{center}
		\textbf{\Large Some more constructions of $ n-$cycle permutation polynomials}
	\end{center}
	\begin{center}
		\textbf{Varsha Jarali$^1$, Prasanna Poojary$^2$ and Vadiraja Bhatta G. R.$^3$}\\[3pt]
		$^{1,3}$Department of Mathematics, Manipal Institute of Technology, Manipal Academy of Higher
		Education, Manipal 576104, India.\\[3pt]
		$^2$Department of Mathematics, Manipal Institute of Technology Bengaluru, Manipal Academy of
		Higher Education, Manipal 576104, India.\\[3pt]
		$^1$varshapjarali@gmail.com, $^2$poojary.prasanna@manipal.edu, $^3$vadiraja.bhatta@manipal.edu
	\end{center}
	\begin{abstract}
		$ n-$cycle permutation polynomials with small n have the advantage that their compositional inverses are efficient
in terms of implementation. These permutation polynomials have significant applications in cryptography and coding theory. In this article, we propose criteria for the construction of $ n-$cycle permutation using linearized polynomial $ L(x) $ for larger $ n $. Furthermore, we investigate and generalize certain novel forms of $ n-$cycle permutation polynomials. Finally, we demonstrate our approach by constructing explicit $ n-$cycle permutation of the form $ L(x)+\gamma h(Tr_{q^{m}/q}(x)) $, and $ G(x)+\gamma f(x) $ with a Boolean function $ f(x) $. The polynomial $ x^{d}+\gamma f(x) $ with $ f(x) $ being a Boolean function is shown to be quadruple and quintuple permutation polynomials. Moreover, linear binomial triple-cycle permutation polynomials are constructed.
	\end{abstract}
	\textbf{Keywords:} Permutation polynomial Trace function, Finite fields, $ n-$cycle permutation polynomials, Linear Binomials\\
    \textbf{Mathematics Subject Classification (2020) 2010:}· 05A05 · 11T06.

\section{Introduction}
\label{intro}
Let $ F_{q} $ be the finite field with $ q $ elements, where $q$ is a power of a prime. A polynomial $ f(x)\in F_{q}[x] $ is called a permutation polynomial if the map $ f:c\rightarrow f(c) $ is a bijection on $ F_{q} $. The study of permutation polynomials has a long history, in the recent two decades, several types of research have been conducted over finite fields \cite{akbary2011constructing,li2018new,yuan2011permutation,yuan2014further,li2020some,tu2015two,zeng2015permutation,zieve2010classes,zieve2009some,marcos2011specific,vadiraja2017study,charpin2008class,jarali2022construction} and they have a wide range of applications in combinatorial designs \cite{ding2006family}, coding theory \cite{ding2014binary,ding2013cyclic,laigle2007permutation}, and cryptography \cite{rivest1978method,singh2009public,singh2020public,singh2011poly,khachatrian2017permutation}. We know that, permutation polynomials over a finite field $ F_{q} $ form a group under composition and reduction modulo $ x^{q}-x $ and that is isomorphic to the symmetric group $ S_{q} $ on $ q $ letters. Therefore, there is a unique compositional inverse  $ f^{-1} $ such that $ f\circ f^{-1}=f^{-1}\circ f= I $, where $ I $ is an identity map.  If there exists a positive integer $ n $ such that $ f^{(n)}=I $, then $ f $ is called $ n-$cycle permutation polynomial. Each permutation over the finite field must be an $ n-$cycle permutation polynomial for at least one positive integer $ n $. In this case, compositional inverse of $ f $ is $ f^{(n-1)}=f^{-1} $. For  $ n=2,3,4,... $ the $ n-$cycle permutation polynomials are named as involution, triple, quadruple, and quintuple respectively. 

In many cases, both the permutation polynomials and their compositional inverses are necessary. Permutation polynomials are often used in block ciphers to design substitution boxes (S-boxes), which form the confusion layer during the encryption process. The recent study \cite{hasan2021c} shows that involutions can be used to construct 4- uniform permutation polynomials that explicitly determine the $ c-$difference distribution table (DDT)  and boomerang connectivity table (BCT) entries, which play a major role in the design of S-boxes in block ciphers. Since involutions are self-inverses, they reduce the extra resources needed for decryption. Further, it is known that these are good candidates when it comes to cryptanalytical attacks as well. Several studies on involutions have been done; the explicit study of involutions was initiated by Charpin, Mesnager, and Sarkar \cite{charpin2016involutions,mesnager2016constructions,charpin2015involutions,charpin2016dickson}. They investigated the monomial involutions and their enumeration in addition to that they constructed linear involutions, linear binomials, and involutions added with a Boolean function over finite fields with even characteristics. Furthermore,  Zheng et al. \cite{zheng2019constructions} gave another criterion for characterization of involutions of the form $ x^{r}h(x^{s}) $ over $ F_{q} $, where $ s|(q-1) $. The AGW Criterion, introduced by Akbary et al. \cite{akbary2011constructing} in 2011, is a potential technique for building permutation polynomials across finite fields. Its importance stems from the fact that it is used to build numerous new classes of permutation polynomials in addition to providing a coherent explanation for numerous previous permutation polynomial constructions. Motivated by this, 
 Niu et al. \cite{niu2020new}  constructed some of the involutions over finite fields by proposing an involutory version of the AGW criterion. Liu et al. \cite{liu2019triple}  computed triple cycle permutation polynomials of monomials, Dickson polynomials, and linearized polynomials over a finite field $ F_{2^{m}} $. Recently,  Wu et al. \cite{wu2020characterizations} obtained characterizations of triple cycle permutation polynomials of the form $ x^{r}h(x^{s}) $. After that, Chen et al.  \cite{chen2021constructions} generalized previously known involutions and gave a unified criterion for the construction of  $ n-$cycle permutation of the form $ x^{r}h(x^{s}) $.  Very recently, Niu et al.  \cite{niu2023more} proposed some criteria for $ n-$cycle permutation polynomials of the form $ xh(\lambda(x)),h(\psi(x))\phi(x)+g(\psi(x))$, and $g(x^{q^{i}}-x+\delta)+bx $ with general $ n $. 

The study of  $ n-$cycle permutation polynomials is motivated by the criterion proposed for the construction of involutions \cite{charpin2016involutions,mesnager2016constructions,charpin2015involutions,charpin2016dickson} and $ n-$cycle permutation polynomials \cite{chen2021constructions,niu2023more}. The usage of the linear structure of a Boolean function for the construction of triple-cycle permutation polynomials \cite{liu2019triple} also inspired the study.  In this chapter, we propose some of the monomial $ n-$cycle permutation polynomials of the type $ x^{2^{2k}-2^{k}+1} $, $ x^{2^{k}+1} $ for Kasami and Gold parameters, respectively, and generalize the enumeration for $ n-$cycle monomials. Furthermore, we construct $ n-$cycle permutation polynomials using the linearized polynomial $L(x) $, and the polynoials $ L(x)+\theta h(Tr_{q^{m}/q}(x))$, and $ G(x)+\gamma f(x) $, where $ f(x) $ is a Boolean function over $ \mathbb{F}_{2^{m}} $ . Finally, additional $ n-$cycle permutation polynomials are derived for $ n<6 $. 

The rest of the paper is organized as follows. In Section 2, we introduce some of the basic definitions. In Section 3, we propose some of the monomial $ n-$cycle permutation polynomials of the form $ x^{2^{2k}-2^{k}+1} $, $ x^{2^{k}+1} $ for Kasami and Gold parameters, respectively, and generalize the enumeration for $ n-$cycle monomials. Furthermore, we construct $ n-$cycle permutation polynomials using the linearized polynomial $L(x) $, and the polynoials $ L(x)+\theta h(Tr_{q^{m}/q}(x))$, and $ G(x)+\gamma f(x) $, where $ f(x) $ is a Boolean function over $ \mathbb{F}_{2^{m}} $ . Finally, additional $ n-$cycle permutation polynomials are derived for $ n<6 $ in Section 4.

\section{Preliminary}
A polynomial $ L(x) $ with the coefficients in an extension field $F_{q^{m}}$ of the field $F_{q}$ represented as 
\begin{equation*}
	L(x)= \sum\limits_{i=0}^{m-1} \alpha_{i}x^{q^{i}},
\end{equation*}
and it is known as $ q- $ linearized polynomial over $F_{q^{m}}$.

Let $ m $ be a positive integer. We denote $ Tr_{q^{m}/q}(x) $ to be trace function from $ F_{q^{m}} $ to $ F_{q} $ such that 
\begin{equation}
	Tr_{q^{m}/q}(x)=x+x^{q}+x^{q^{2}}+\cdots+x^{q^{m-1}}.
\end{equation}

\begin{definition}\cite{niu2023more}\label{d1}
	If there exists a positive integer $ n $ such that $ f^{(n)}=I $, we say $ f $ is an $ n-$cycle permutation polynomial. 	
\end{definition}
One can also prove  $ f^{(n-1)}(x)=f^{-1}(x) $ for all $ x\in F_{q^{m}} $ to show $ f(x) $ is an $ n-$cycle permutation polynomial over $ F_{q^{m}} $.

\begin{definition}\label{d2}\cite{charpin2016involutions}
	Let $ m=rk $, $1\leq k\leq m$. Let $ f $ be a function from $ F_{2^{m}} $ to $ F_{2^{k}} $, $ \gamma\in F_{2^{m}}^{*} $ and $ b $ is fixed in $ F_{2^{k}} $. Then, $ \gamma $ is a $ b- $ linear structure of $ f $ if $ f(x)+f(x+u\gamma)=ub $ for all $ x\in F_{2^{m}} $ and for all $ u\in F_{2^{k}} $. In particular, when $ k=1 $, $\gamma$ is usually said to be a $ b- $ linear structure of the Boolean function $ f $, that is $ f(x)+f(x+\gamma)=b $ for all $ x\in F_{2^{m}} $. 
\end{definition}
\begin{lemma}\label{l2}\cite{charpin2008class}
	Let $ G(x) $ be a permutation polynomial of $ F_{2^{m}} $, $ f $ be a Boolean function on $ F_{2^{m}} $ and $ \gamma\in F_{2^{m}}^{*} $, then $ S(x)=G(x)+\gamma f(x) $ is a permutation polynomial of $ F_{2^{m}} $ if and only if $ \gamma $ is a 0- linear structure of $ f\circ G^{-1} $.
\end{lemma}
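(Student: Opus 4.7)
My plan is to exploit the fact that post-composing with the permutation $G^{-1}$ preserves bijectivity, so that the statement about $S(x) = G(x) + \gamma f(x)$ reduces to a statement about the much simpler map $T(y) = y + \gamma h(y)$, where $h := f \circ G^{-1}$. Concretely, I would first observe that $S$ is a permutation of $F_{2^m}$ if and only if $S \circ G^{-1}$ is, and that
\[
S(G^{-1}(y)) = G(G^{-1}(y)) + \gamma f(G^{-1}(y)) = y + \gamma h(y).
\]

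The next step is to analyse when $T(y) = y + \gamma h(y)$ is injective. Since $f$ is Boolean, $h$ takes values in $\{0,1\} \subseteq F_{2^m}$, so $\gamma h(y) \in \{0,\gamma\}$. Working in characteristic $2$, the equation $T(y_1) = T(y_2)$ rearranges to $y_1 + y_2 = \gamma\bigl(h(y_1) + h(y_2)\bigr)$. A short case split shows that if $h(y_1) = h(y_2)$ then $y_1 = y_2$, while if $h(y_1) \ne h(y_2)$ then necessarily $y_2 = y_1 + \gamma$. Hence injectivity of $T$ fails precisely when there exists some $y$ with $h(y) \ne h(y+\gamma)$; equivalently, $T$ is a permutation if and only if $h(y) + h(y+\gamma) = 0$ for every $y \in F_{2^m}$. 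By Definition \ref{d2} with $k = 1$ and $b = 0$, this is exactly the condition that $\gamma$ is a $0$-linear structure of $f \circ G^{-1}$, yielding the claimed equivalence.

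I do not expect a serious obstacle: once the change of variables $x = G^{-1}(y)$ absorbs $G$ into the identity, the argument is a routine characteristic-$2$ case analysis driven by the Boolean-valuedness of $h$. The only point that requires a small amount of care is to verify both directions of the biconditional symmetrically, namely that failure of injectivity of $T$ \emph{corresponds exactly} to, and not merely is implied by, a violation of the linear-structure condition; this follows because every witness $(y, y+\gamma)$ with $h(y) \ne h(y+\gamma)$ produces a genuine collision $T(y) = T(y+\gamma)$.
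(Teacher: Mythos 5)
The paper states this lemma without proof, citing Charpin--Kyureghyan, so there is no in-paper argument to compare against; your proof is correct and is essentially the standard one from that source. The reduction $S\circ G^{-1}(y)=y+\gamma h(y)$ with $h=f\circ G^{-1}$, followed by the characteristic-$2$ case split showing that a collision forces $y_2=y_1+\gamma$ with $h(y_1)\neq h(y_2)$ (and conversely that any such $y_1$ produces a collision), cleanly establishes both directions of the equivalence with the $0$-linear-structure condition of Definition \ref{d2}.
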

\begin{lemma}\label{l3}\cite{charpin2016involutions}
	Let $ G(x) $ be a permutation polynomial of $F_{2^{m}}$, $ f $ be a Boolean function on $ F_{2^{m}} $ and $ \gamma\in F_{2^{m}}^{*} $. Assume $ S(x)= G(x)+\gamma f(x)$ is a permutation polynomial, then $ S^{-1}=G^{-1}\circ F $, where $ F(x)=x+\gamma f(G^{-1}(x)) $.
\end{lemma}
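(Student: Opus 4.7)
The plan is to verify directly that the candidate map $H:=G^{-1}\circ F$, with $F(x)=x+\gamma f(G^{-1}(x))$, satisfies $S\circ H=I$. Since $S$ is a permutation of the finite set $F_{2^m}$, a one-sided inverse is automatically the two-sided inverse, so this suffices to conclude $S^{-1}=H=G^{-1}\circ F$.

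First I would unwind the composition. Fix $x\in F_{2^m}$ and set $u:=G^{-1}(x)$, so that the computation reduces to evaluating
\begin{equation*}
S(H(x))=G(H(x))+\gamma f(H(x)),\qquad H(x)=G^{-1}\bigl(x+\gamma f(u)\bigr).
\end{equation*}
Applying $G$ to $H(x)$ gives $G(H(x))=x+\gamma f(u)$, so the problem collapses to showing $\gamma f(H(x))=\gamma f(u)$, i.e.\ $f(H(x))=f(u)$ as elements of $F_2$.

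The key step is to invoke Lemma~\ref{l2}: the hypothesis that $S(x)=G(x)+\gamma f(x)$ is a permutation forces $\gamma$ to be a $0$-linear structure of $f\circ G^{-1}$. By Definition~\ref{d2} with $k=1$ and $b=0$, this means
\begin{equation*}
(f\circ G^{-1})(y+\gamma)=(f\circ G^{-1})(y)\quad\text{for all }y\in F_{2^m}.
\end{equation*}
I would then split into the two possible values of $f(u)\in F_2$. If $f(u)=0$, then $H(x)=G^{-1}(x)=u$, giving $f(H(x))=f(u)$ trivially. If $f(u)=1$, then $H(x)=G^{-1}(x+\gamma)$, and the linear-structure identity applied at $y=x$ yields $f(G^{-1}(x+\gamma))=f(G^{-1}(x))=f(u)$. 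Either way $f(H(x))=f(u)$, and the characteristic-two cancellation $\gamma f(u)+\gamma f(u)=0$ delivers $S(H(x))=x$.

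I don't anticipate a real obstacle: the proof is essentially a bookkeeping exercise once the linear-structure property from Lemma~\ref{l2} is plugged in. The only subtle point to make sure of is that the linear-structure hypothesis is used at the correct argument (namely at $y=x$, not at $y=u$), which is what causes the shift by $\gamma$ on the $F_{2^m}$ side to translate into an invariance of $f\circ G^{-1}$ exactly matching the expression defining $F$. After that, finiteness of $F_{2^m}$ upgrades the identity $S\circ H=I$ to $H=S^{-1}$, completing the argument.
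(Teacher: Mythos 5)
Your argument is correct. Note that the paper does not actually prove this lemma --- it is imported verbatim from \cite{charpin2016involutions} --- so there is no in-paper proof to compare against; your direct verification that $S\circ(G^{-1}\circ F)=I$ is essentially the standard argument from that reference. All the steps check out: $G(H(x))=x+\gamma f(u)$ by definition of $H$, the case split on $f(u)\in\{0,1\}$ correctly reduces the problem to the invariance $f(G^{-1}(x+\gamma))=f(G^{-1}(x))$, and that invariance is exactly the $0$-linear-structure property of $f\circ G^{-1}$ guaranteed by Lemma~\ref{l2} once $S$ is assumed to be a permutation. You are also right to apply the linear-structure identity at $y=x$ rather than at $u$, and the final appeal to finiteness to upgrade a one-sided inverse to the two-sided inverse is sound.
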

\begin{theorem}\label{t1}\cite{wu2013linearized}
	Let $ L(x)=\sum\limits_{i=0}^{m-1}a_{i}x^{q^{i}} $ be a linearised permutation polynomial over $ F_{q^{m}} $ and $ \bar{a_{i}} $ denote the $(i,0)-$ th cofactor of $ D_{L} $ the determinant of $ D_{L} $ is, 
	\begin{equation}
		det(D_{L})=a_{0}\bar{a_{0}}+\sum\limits_{i=0}^{m-1}a^{2^{i}}_{m-i}\bar{a_{i}}.
	\end{equation}
	Then, the compositional inverse of $ L(x) $ is given by,
	\begin{equation}
		L^{-1}(x)=\frac{1}{det(D_{L})}\sum\limits_{i=0}^{m-1}\bar{a_{i}}x^{2^{i}}.
	\end{equation}
\end{theorem}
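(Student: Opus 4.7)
The plan is to prove this by working with the Dickson matrix (also called the $q$-associate matrix) of a linearized polynomial, which converts the problem from polynomial composition into matrix algebra. First I would associate to $L(x)=\sum_{i=0}^{m-1} a_i x^{q^i}$ the matrix
\begin{equation*}
D_L=\begin{pmatrix} a_0 & a_1 & \cdots & a_{m-1} \\ a_{m-1}^{q} & a_0^{q} & \cdots & a_{m-2}^{q} \\ \vdots & \vdots & \ddots & \vdots \\ a_1^{q^{m-1}} & a_2^{q^{m-1}} & \cdots & a_0^{q^{m-1}} \end{pmatrix},
\end{equation*}
whose $(i,j)$-entry is $a_{j-i\bmod m}^{q^{i}}$. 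The first key fact to recall is that the set of $q$-linearized polynomials over $F_{q^m}$, modulo $x^{q^m}-x$, forms a (non-commutative) ring under addition and composition, and the map $L\mapsto D_L$ is a ring isomorphism onto the ring of such Dickson matrices. In particular composition corresponds to matrix product and $L$ is a permutation of $F_{q^m}$ precisely when $D_L$ is invertible, i.e. $\det(D_L)\neq 0$.

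Next I would read off the determinant formula by Laplace expansion along the first column of $D_L$. The entries of that column are $a_0, a_{m-1}^{q}, a_{m-2}^{q^{2}},\dots,a_1^{q^{m-1}}$, and the $(i,0)$-th cofactors are exactly the $\bar{a_i}$ in the statement; hence
\begin{equation*}
\det(D_L)=a_0\bar{a_0}+\sum_{i=1}^{m-1} a_{m-i}^{q^{i}}\bar{a_i},
\end{equation*}
which matches the stated identity. (The display in the theorem with $a_{m-i}^{2^{i}}$ corresponds to taking $q=2$; in general $q^{i}$ should appear.)

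The main step is then to recover $L^{-1}$ from $D_L^{-1}$. Since $L^{-1}$ is itself a $q$-linearized polynomial, its coefficient vector is precisely the first row of $D_L^{-1}$. Using the classical adjugate formula
\begin{equation*}
D_L^{-1}=\frac{1}{\det(D_L)}\,\mathrm{adj}(D_L),
\end{equation*}
the $(0,i)$-entry of $D_L^{-1}$ equals $\bar{a_i}/\det(D_L)$, because the adjugate is the transpose of the cofactor matrix and the $\bar{a_i}$ are the cofactors in the first column of $D_L$. Writing the inverse polynomial from this first row then yields
\begin{equation*}
L^{-1}(x)=\frac{1}{\det(D_L)}\sum_{i=0}^{m-1}\bar{a_i}\,x^{q^{i}},
\end{equation*}
which is the desired formula (again, $x^{q^i}$ in general, specializing to $x^{2^i}$ when $q=2$).

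The main obstacle I anticipate is not any single deep step but the bookkeeping: verifying the isomorphism between linearized polynomials and Dickson matrices (which requires using $(ab)^{q}=a^{q}b^{q}$ and reindexing modulo $m$ when composing), and then carefully matching signs and indices between the first-column cofactors $\bar{a_i}$ and the first-row entries of $\mathrm{adj}(D_L)$. Once the correspondence $L\leftrightarrow D_L$ is set up correctly, both the determinant expansion and the adjugate identity are routine linear algebra.
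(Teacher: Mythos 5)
The paper states this result only as a citation from Wu and Liu and gives no proof of its own, so there is nothing internal to compare against; your Dickson-matrix argument is correct and is essentially the standard proof from that source (composition of linearized polynomials corresponds to multiplication of their Dickson matrices, so $D_{L^{-1}}=D_L^{-1}$, and the adjugate formula reads the coefficients of $L^{-1}$ off the first row). You also rightly correct two slips in the statement as printed: the determinant sum should start at $i=1$ to avoid double-counting the $a_{0}\bar{a_{0}}$ term, and the exponents should be $q^{i}$ rather than $2^{i}$ for general $q$.
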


\section{$ n-$cycle permutation polynomials}\label{sec2}

In this section, we prove that $ L(x) $ is an $ n-$cycle permutation polynomial and we characterize some new $ n-$cycle permutation polynomials.The proof relies on showing the equality $ F^{(n-1)}(x)=F^{-1}(x) $. The derivation of $ L^{-1}(x)$ for $ L(x) $ is taken from Theorem (\ref{t1}).

\begin{proposition}\label{p11}
    The linearized polynomial $ L(x)=\sum\limits_{i=0}^{m-1}a_{i}x^{2^{i}} $ is a triple cycle permutation polynomial over $ F_{2^{m}} $ if and only if 
    \begin{equation}
		det(D_{L})\not=0 
  \end{equation}and
 \begin{equation}
       \bar{a_k}=det(D_L) \left(\sum\limits_{i=0}^{k}a_ia_{k-i}^{2^i}+\sum\limits_{i=k+1}^{m-1}a_ia_{m+1-i}^{2^i}\right)
    \end{equation}
    for each $ k\in \{0,1,\ldots,m-1\} $, where $ k-i $ is computed modulo $ m $.
\end{proposition}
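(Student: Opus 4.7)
The plan is to verify the triple-cycle condition $L^{(3)} = I$ by recasting it as $L \circ L = L^{-1}$, and then match the coefficients of the linearised polynomial $L(L(x))$ against the explicit inverse supplied by Theorem \ref{t1}. The condition $\det(D_L) \neq 0$ is precisely the invertibility of the Dickson matrix, equivalently $L$ being a permutation of $F_{2^m}$; this is forced because the $n$-cycle notion requires $L$ to be bijective, so it is clearly necessary, and Theorem \ref{t1} uses exactly this to write down $L^{-1}$.

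Granted $\det(D_L) \neq 0$, Theorem \ref{t1} provides
$$L^{-1}(x) = \frac{1}{\det(D_L)}\sum_{k=0}^{m-1} \bar{a}_k\, x^{2^k}.$$
On the other side, using Frobenius additivity in characteristic $2$,
$$L(L(x)) = \sum_{j=0}^{m-1} a_j \Bigl(\sum_{i=0}^{m-1} a_i x^{2^i}\Bigr)^{2^j} = \sum_{i=0}^{m-1}\sum_{j=0}^{m-1} a_j\, a_i^{2^j}\, x^{2^{i+j}}.$$
Swapping the names of the dummy indices (to put the matrix in the form $a_i a_{\cdot}^{2^i}$) and then using $x^{2^m}=x$ on $F_{2^m}$ to reduce exponents modulo $m$, I would collect, for each fixed $k$, the pairs $(i,j)$ with $i+j \equiv k \pmod m$. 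Parametrising by $i$, the range $0 \le i \le k$ contributes terms with $j = k-i$ (no reduction needed), while the range $k < i \le m-1$ contributes terms with $j = k-i+m$, where on field elements the Frobenius exponent $2^{k-i+m}$ collapses to $2^{k-i}$ via $a_i^{2^m}=a_i$. This produces $L(L(x)) = \sum_{k=0}^{m-1} c_k x^{2^k}$ with $c_k$ equal to the two-sum expression appearing on the right-hand side of the proposition.

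Equating $L(L(x))$ with $L^{-1}(x)$ coefficient-wise in the basis $\{x^{2^k}\}_{k=0}^{m-1}$ and clearing the denominator $\det(D_L)$ yields exactly the displayed identities; since every step is reversible, the two conditions stated are jointly equivalent to $L^{(2)} = L^{-1}$, i.e., to $L$ being triple-cycle. The main obstacle in execution is the careful index bookkeeping in the second sum: one must simultaneously track the subscript shift on the $a$-factor when $i>k$ and the fact that the Frobenius exponent shifted by $m$ acts trivially on elements of $F_{2^m}$, so that the collected coefficient lines up precisely with the form written in the statement. Once this split is laid out, the matching against the inverse formula from Theorem \ref{t1} is a term-by-term comparison.
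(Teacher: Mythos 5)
Your proposal is correct and follows essentially the same route as the paper: reduce the triple-cycle condition to $L\circ L=L^{-1}$, expand $L(L(x))=\sum_{i,j}a_ia_j^{2^i}x^{2^{i+j}}$, reduce exponents via $x^{2^m}=x$ by splitting the inner index at $i\le k$ versus $i>k$, and compare coefficients against the inverse from Theorem~\ref{t1}. The only caution is in your index bookkeeping for the second range: the quantity $k-i+m$ is the \emph{subscript} of the second $a$-factor (the Frobenius exponent on that factor stays $2^i$, and it is the exponent of $x$, namely $2^{k+m}$, that collapses to $2^k$), which in fact yields $a_{m+k-i}$ rather than the $a_{m+1-i}$ printed in the statement.
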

\begin{proof}
    Consider linearized polynomial $ L(x)=\sum\limits_{i=0}^{m-1}a_{i}x^{2^{i}} $. Let us compose $L(x)$ two times,
    \begin{equation}
        L(L(x))=\sum\limits_{j=0}^{m-1}\sum\limits_{i=0}^{m-1}a_ia_j^{2^i}x^{2^{i+j}}=\sum\limits_{k=0}^{m-1}\sum\limits_{i=0}^{k}a_ia_{k-i}^{2^i}x^{2^k}+\sum\limits_{k=m}^{2m-2}\sum\limits_{i=k-m+1}^{m-1}a_ia_{k-i}^{2^i}x^{2^k}
    \end{equation}

    Here we use the property of the finite field $ F_{2^{m}} $, $x^{2^m}=x$ and further reduce the above equation \begin{equation}
        L(L(x))=\sum\limits_{j=0}^{m-1}\sum\limits_{i=0}^{m-1}a_ia_j^{2^i}x^{2^{i+j}}=\sum\limits_{k=0}^{m-1}\left(\sum\limits_{i=0}^{k}a_ia_{k-i}^{2^i}+\sum\limits_{i=k+1}^{m-1}a_ia_{m+1-i}^{2^i}\right)x^{2^k}
    \end{equation}
   
    Noting the fact that $ L(x) $ is a permutation polynomial if and only if the matrix of $ D_{L} $ is non-singular, $ L(x) $ is a triple cycle permutation polynomial if and only if $ det(D_{L})\not=0 $ and $ L^{2}(x)=L^{-1}(x) $. Further writing $L^{-1}(x) $ using Theorem (\ref{t1}) as,    
	\begin{equation}
		L^{-1}(x)=\frac{1}{det(D_{L})}\sum\limits_{i=0}^{m-1}\bar{a_{i}}x^{2^{i}}.
	\end{equation}
	Comparing the coefficients on both sides, we get
    \begin{equation}
       \bar{a_k}=det(D_L) \left(\sum\limits_{i=0}^{k}a_ia_{k-i}^{2^i}+\sum\limits_{i=k+1}^{m-1}a_ia_{m+1-i}^{2^i}\right)
    \end{equation}
    for each $ k\in \{0,1,\ldots,m-1\} $, where $ k-i $ is computed modulo $ m $.
\end{proof}

\begin{example}
    Consider the linearized polynomial $L(x)=a_0x+a_1x^2$ and $a_i=0$ for all $i=2,3,4,\ldots,m-1$, then 
    $$L(L(x))=a_0^2x+(a_0a_1+a_1a_0^2)x^2+a_1^3x^{2^2}.$$
    Hence, $L(x)=a_0x+a_1x^2$ is triple cycle permutation polynomial if and only if  \begin{equation}
		det(D_{L})\not=0 
  \end{equation}and
  \begin{equation}
       \bar{a_k}=det(D_L) \left(a_0^2+(a_0a_1+a_1a_0^2)+a_1^3\right)
    \end{equation}
    for each $ k\in \{0,1,\ldots,m-1\} $, where $ k-i $ is computed modulo $ m $.
\end{example}

\begin{theorem}\label{t2}
	The linearized polynomial $ L(x)=\sum\limits_{i=0}^{m-1}a_{i}x^{2^{i}} $ is an $ n-$cycle permutation polynomial over $ F_{2^{m}} $ if and only if 
	\begin{equation}
		det(D_{L})\not=0 
  \end{equation}and
 \begin{equation}
		\bar{a}_{i_{(2n-1)}}=det(D_{L})\sum\limits_{i_{(2n-1)}=0}^{m-1}c_{i_{(2n-1)}},
	\end{equation} 
     where,\begin{equation}
        c_{i_{(2n-1)}} = \begin{cases}
    \sum\limits_{i_{(2n-3)}=0}^{i_{(2n-1)}}c_{i_{(2n-3)}}a_{i_{(2n-1)}-i_{(2n-3)}}^{2^{i_{(2n-3)}}} & $\text{if $ 0\leq i_{(2n-3)}\leq i_{(2n-1)} $}$,\\
    \sum\limits_{i_{(2n-3)}=i_{(2n-1)}+1}^{m-1}c_{i_{(2n-3)}}a_{m+1-i_{(2n-3)}}^{2^{i_{(2n-3)}}} & $\text{if $i_{(2n-1)}+1\leq i_{(2n-3)}\leq m-1$}$.
		\end{cases}
    \end{equation}
  for each $ i_{(2n-1)}\in \{0,1,\ldots,m-1\} $, where $ i_{(2n-1)}-i_{(2n-3)} $ is computed modulo $ m $.
\end{theorem}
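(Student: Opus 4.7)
The plan is to reduce Theorem~\ref{t2} to the same coefficient-comparison argument used in Proposition~\ref{p11}, carried out inductively on $n$. The base case $n=3$ is precisely Proposition~\ref{p11}. For the inductive step, I would maintain as hypothesis that after $n-2$ self-compositions of $L$ one has
\begin{equation*}
L^{(n-2)}(x)=\sum_{i_{(2n-3)}=0}^{m-1} c_{i_{(2n-3)}}\, x^{2^{i_{(2n-3)}}},
\end{equation*}
where the scalars $c_{i_{(2n-3)}}$ are produced by the same two-case recurrence as in the theorem statement, one composition level shorter.

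Next, I would compose once more and write $L^{(n-1)}(x)=L\bigl(L^{(n-2)}(x)\bigr)=\sum_{j=0}^{m-1}a_j\bigl(L^{(n-2)}(x)\bigr)^{2^j}$. Additivity of the Frobenius in characteristic two yields a double sum of the form $\sum_{j,k} a_j c_k^{2^j} x^{2^{j+k}}$, which I would split according to whether $j+k<m$ or $j+k\geq m$ and then reduce exponents using $x^{2^m}=x$ on $\mathbb{F}_{2^m}$. Collecting terms with exponent $2^{i_{(2n-1)}}$ and relabeling the summation indices to match the author's double-subscript convention recovers exactly the two branches in the definition of $c_{i_{(2n-1)}}$, thereby advancing the induction by one level.

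Finally, since $L(x)$ is a permutation polynomial iff $\det(D_L)\neq 0$, and $L$ is an $n$-cycle permutation iff $L^{(n-1)}(x)=L^{-1}(x)$, I would invoke Theorem~\ref{t1} to write
\begin{equation*}
L^{-1}(x)=\frac{1}{\det(D_L)}\sum_{i=0}^{m-1}\bar{a}_i\, x^{2^i},
\end{equation*}
and compare the coefficient of $x^{2^{i_{(2n-1)}}}$ on both sides. Clearing $\det(D_L)$ from the denominator then yields the claimed identity $\bar{a}_{i_{(2n-1)}}=\det(D_L)\sum c_{i_{(2n-1)}}$.

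The main obstacle is the bookkeeping across the $n-2$ compositions: at each stage the exponent sum $j+k$ may exceed $m$, and one must carefully collapse the two resulting index ranges into the single case split appearing in the statement, all the while keeping the multi-subscript indices $i_{(2n-1)}, i_{(2n-3)},\ldots$ aligned with the composition stage they represent. A secondary point needing care is that the apparent asymmetry in the recurrence (one branch using $a_{i_{(2n-1)}-i_{(2n-3)}}$, the other $a_{m+1-i_{(2n-3)}}$) must be verified against the exponent arithmetic already carried out in Proposition~\ref{p11}, which exhibits the same pattern and thus serves as the template for the generic inductive step.
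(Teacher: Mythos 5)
Your proposal follows essentially the same route as the paper: the authors compute $L^{(2)}(x)$ and $L^{(3)}(x)$ explicitly, split the exponent sum according to whether it exceeds $m$, reduce via $x^{2^m}=x$, and then assert the general pattern for $L^{(n-1)}(x)$ (an informal version of your induction) before comparing coefficients with $L^{-1}(x)$ from Theorem (\ref{t1}) exactly as you describe. Your explicit inductive framing is a slightly cleaner presentation of the same argument, so no substantive difference remains.
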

\begin{proof}
	 Consider linearized polynomial $ L(x)=\sum\limits_{i_1=0}^{m-1}a_{i_1}x^{2^{i_1}} $. Let us compose $L(x)$ two times,
    \begin{equation}
    \begin{split}
        L(L(x))&=\sum\limits_{i_2=0}^{m-1}\sum\limits_{i_1=0}^{m-1}a_{i_1}a_{i_2}^{2^{i_1}}x^{2^{i_1+i_2}}\\&=\sum\limits_{i_3=0}^{m-1}\sum\limits_{i_1=0}^{i_3}a_{i_1}a_{i_3-i_1}^{2^{i_1}}x^{2^{i_3}}+\sum\limits_{i_3=m}^{2m-2}\sum\limits_{i_1=i_3-m+1}^{m-1}a_{i_1}a_{i_3-i_1}^{2^{i_1}}x^{2^{i_3}}
    \end{split}
    \end{equation}

    Here we use the property of the finite field $ F_{2^{m}} $, $x^{2^m}=x$ and further reduce the above equation \begin{equation}\label{eq1}
        L(L(x))==\sum\limits_{i_3=0}^{m-1}\left(\sum\limits_{i_1=0}^{i_3}a_{i_1}a_{i_3-i_1}^{2^{i_1}}+\sum\limits_{i_1=i_3+1}^{m-1}a_{i_1}a_{m+1-i_1}^{2^{i_1}}\right)x^{2^{i_3}}
    \end{equation}
    
    Let us consider \begin{equation}
        c_{i_3} = \begin{cases}
    \sum\limits_{i_1=0}^{i_3}a_{i_1}a_{i_3-i_1}^{2^{i_1}} & $\text{if $ 0\leq i_1\leq i_3 $}$,\\
    \sum\limits_{i_1=i_3+1}^{m-1}a_{i_1}a_{m+1-i_1}^{2^{i_1}} & $\text{if $i_3+1\leq i_1\leq m-1$}$.
		\end{cases}
    \end{equation}
    for each $i_3\in \{0,1,\ldots,m-1\} $.

    Then above equation (\ref{eq1}) reduces to
    \begin{equation}
        L(L(x))=\sum\limits_{i_3=0}^{m-1}c_{i_3}x^{2^{i_3}}
    \end{equation}
    
	With one more composition, it follows that,
	\begin{equation}
		L(L(L(x)))=\sum\limits_{i_4=0}^{m-1}\sum\limits_{i_3=0}^{m-1}c_{i_3}(a_{i_4}x^{2^{i_4}})^{2^{i_3}}=\sum\limits_{i_4=0}^{m-1}\sum\limits_{i_3=0}^{m-1}c_{i_3}a_{i_4}^{2^{i_3}}x^{2^{i_4+i_3}}
	\end{equation}
    It reduces to
    \begin{equation}\label{eq2}
        \begin{split}
            L^3(x)=\sum\limits_{i_5=0}^{m-1}\left(\sum\limits_{i_3=0}^{i_5}c_{i_3}a_{i_5-i_3}^{2^{i_3}}+\sum\limits_{i_3=i_5+1}^{m-1}c_{i_3}a_{m+1-i_3}^{2^{i_3}}\right)x^{2^{i_5}}
        \end{split}
    \end{equation}
    
     Let us consider \begin{equation}
        c_{i_5} = \begin{cases}
    \sum\limits_{i_3=0}^{i_5}c_{i_3}a_{i_5-i_3}^{2^{i_3}} & $\text{if $ 0\leq i_3\leq i_5 $}$,\\
    \sum\limits_{i_3=i_5+1}^{m-1}c_{i_3}a_{m+1-i_3}^{2^{i_3}} & $\text{if $i_5+1\leq i_3\leq m-1$}$.
		\end{cases}
    \end{equation}
  for each $i_5=\in \{0,1,\ldots,m-1\} $.

    Then above equation (\ref{eq2}) reduces to
    \begin{equation}
        L^3(x)=\sum\limits_{i_5=0}^{m-1}c_{i_5}x^{2^{i_5}}
    \end{equation}
    
	In general for $(n-1)$ composition we get
	\begin{equation}
    L^{n-1}(x)=\sum\limits_{i_{(2n-1)}=0}^{m-1}c_{i_{(2n-1)}}x^{2^{i_{(2n-1)}}}
	\end{equation}
    where,\begin{equation}
        c_{i_{(2n-1)}} = \begin{cases}
    \sum\limits_{i_{(2n-3)}=0}^{i_{(2n-1)}}c_{i_{(2n-3)}}a_{i_{(2n-1)}-i_{(2n-3)}}^{2^{i_{(2n-3)}}} & $\text{if $ 0\leq i_{(2n-3)}\leq i_{(2n-1)} $}$,\\
    \sum\limits_{i_{(2n-3)}=i_{(2n-1)}+1}^{m-1}c_{i_{(2n-3)}}a_{m+1-i_{(2n-3)}}^{2^{i_{(2n-3)}}} & $\text{if $i_{(2n-1)}+1\leq i_{(2n-3)}\leq m-1$}$.
		\end{cases}
    \end{equation}
  for each $i_{(2n-1)}\in \{0,1,\ldots,m-1\} $.  
    
	By noting the fact that $ L(x) $ is a permutation polynomial if and only if matrix of $ D_{L} $ is non singular, $ L(x) $ is an $ n-$cycle permutation polynomial if and only if $ det(D_{L})\not=0 $ and $ L^{n-1}(x)=L^{-1}(x) $. Furthermore, using Theorem (\ref{t1}), $L^{-1}(x) $ can be expressed as:     
	\begin{equation}
		L^{-1}(x)=\frac{1}{det(D_{L})}\sum\limits_{k=0}^{m-1}\bar{a_{k}}x^{2^{k}}.
	\end{equation}
	By equating the coefficients on both sides, we obtain
	\begin{equation}
		\bar{a}_{i_{(2n-1)}}=det(D_{L})\sum\limits_{i_{(2n-1)}=0}^{m-1}c_{i_{(2n-1)}},
	\end{equation} 
	for each $ i_{(2n-1)}\in \{0,1,\ldots,m-1\} $, where $ i_{(2n-1)}-i_{(2n-3)} $ is computed modulo $ m $.
\end{proof}
\subsection{Monomials}
Inspired by \cite{niu2023more,cohn2012advanced} we consider the monomials in this section and discuss the results regarding the enumeration of $ n-$cycle permutation polynomials over $ F_{2^{m}} $.
\begin{lemma}\cite{niu2023more}\label{l1}
	Let $ f(x)=x^{d} $ be a monomial over $ F_{q^{m}} $. Then, $ f(x) $ is $ n-$cycle permutation polynomial over $ F_{q^{m}} $ if and only if $d^n \equiv 1 \pmod{q^m-1}$.
\end{lemma}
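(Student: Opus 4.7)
The plan is direct since composition of monomials simply multiplies exponents, so the condition $f^{(n)} = I$ translates immediately into a condition on $d^n$ modulo the order of the multiplicative group $F_{q^m}^*$.

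First, I would compute the $n$-fold composition explicitly. For $f(x) = x^d$, an easy induction on $n$ gives $f^{(n)}(x) = x^{d^n}$ for every $x \in F_{q^m}$. This reduces the defining equation $f^{(n)}(x) = x$ to the identity $x^{d^n} = x$ holding for every $x \in F_{q^m}$.

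Next, I would split the verification of $x^{d^n} = x$ into the two obvious cases. For $x = 0$ it is automatic. For $x \in F_{q^m}^*$, the multiplicative group has order $q^m - 1$, so $x^{d^n - 1} = 1$ holds for every nonzero $x$ if and only if $(q^m - 1) \mid (d^n - 1)$, equivalently $d^n \equiv 1 \pmod{q^m - 1}$. This gives both directions of the biconditional for the identity $f^{(n)} = I$.

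Finally, I would remark that the permutation hypothesis is built into the congruence: if $d^n \equiv 1 \pmod{q^m - 1}$, then $d$ is a unit modulo $q^m - 1$, so $\gcd(d, q^m - 1) = 1$, which is exactly the standard criterion for $x \mapsto x^d$ to permute $F_{q^m}$. Conversely, if $f$ is an $n$-cycle permutation then of course $f$ is a permutation. There is no genuine obstacle here; the only thing to be careful about is keeping track of the case $x = 0$ separately from $x \in F_{q^m}^*$ and noting that the permutation condition need not be assumed separately because it follows from the exponent congruence.
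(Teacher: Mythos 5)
Your proof is correct. The paper does not actually prove this lemma---it is quoted from Niu et al.\ \cite{niu2023more} without argument---so there is no in-paper proof to compare against; your route (compose to get $f^{(n)}(x)=x^{d^{n}}$, then test $x^{d^{n}}=x$ separately at $0$ and on $F_{q^{m}}^{*}$) is the standard one. The only step left implicit is that the ``only if'' direction of the equivalence $x^{d^{n}-1}=1$ for all $x\in F_{q^{m}}^{*}$ $\Leftrightarrow$ $(q^{m}-1)\mid(d^{n}-1)$ requires evaluating at a primitive element, i.e.\ the cyclicity of $F_{q^{m}}^{*}$ (Lagrange gives only the ``if'' direction); your closing remark that the congruence already forces $\gcd(d,q^{m}-1)=1$, so the permutation hypothesis need not be assumed separately, is correct and worth keeping.
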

\begin{lemma}\cite{cohn2012advanced}\label{l4}
	Let $ m=\prod_{i=1}^{r}p_{i}^{\alpha_{i}} $ for $ r $ distinct prime factors $ p_{i} $'s	and $ r $ positive integers $ \alpha_{i} $'s. For each $ i $ with $1\leq i\leq r$, denote $ s_{i} $ the number of solutions to the congruence equation $ f(x)\equiv0\pmod {p_{i}^{\alpha_{i}}} $. Then, the congruence equation $ f(x)\equiv0\pmod m $ exactly has $ \prod_{i=1}^{r}s_{i} $ solutions.
\end{lemma}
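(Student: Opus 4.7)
The plan is to invoke the Chinese Remainder Theorem (CRT) to decompose the single congruence modulo $m$ into a system of congruences modulo the prime power factors, and then count the solutions of the system by multiplying the counts of the individual pieces. Since the factors $p_i^{\alpha_i}$ are pairwise coprime, CRT gives a ring isomorphism $\mathbb{Z}/m\mathbb{Z} \cong \prod_{i=1}^{r} \mathbb{Z}/p_i^{\alpha_i}\mathbb{Z}$ sending each residue class $x \bmod m$ to the tuple $(x \bmod p_i^{\alpha_i})_{i=1}^{r}$, and this isomorphism commutes with any polynomial evaluation $f$.

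First, I would show the equivalence of solution sets. Given $x \in \mathbb{Z}/m\mathbb{Z}$, the condition $f(x) \equiv 0 \pmod{m}$ is equivalent, by the fact that $m = \prod_i p_i^{\alpha_i}$ with pairwise coprime factors, to the simultaneous system $f(x) \equiv 0 \pmod{p_i^{\alpha_i}}$ for every $i \in \{1,\dots,r\}$. Here I use that a multiple of $m$ is precisely an integer divisible by each $p_i^{\alpha_i}$.

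Next, I would set up the counting. Let $S \subseteq \mathbb{Z}/m\mathbb{Z}$ be the solution set of $f(x)\equiv 0 \pmod m$ and, for each $i$, let $S_i \subseteq \mathbb{Z}/p_i^{\alpha_i}\mathbb{Z}$ be the solution set of $f(x) \equiv 0 \pmod{p_i^{\alpha_i}}$, so $|S_i| = s_i$. Under the CRT isomorphism, the equivalence in the previous step translates into the statement that $S$ corresponds bijectively to the Cartesian product $S_1 \times S_2 \times \cdots \times S_r$: an element $x \bmod m$ lies in $S$ if and only if every component of its image lies in the corresponding $S_i$. Consequently $|S| = \prod_{i=1}^{r} |S_i| = \prod_{i=1}^{r} s_i$, which is the claimed count.

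There is no real obstacle here; the argument is essentially a direct application of CRT plus the fact that polynomial maps respect the product ring structure. The only mild care needed is to verify that solutions are genuinely counted as residue classes rather than integer representatives, so that the bijection between $S$ and $S_1 \times \cdots \times S_r$ is well defined and the product formula for cardinalities applies without overcounting.
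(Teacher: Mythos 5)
Your argument is correct: the Chinese Remainder Theorem isomorphism $\mathbb{Z}/m\mathbb{Z}\cong\prod_{i}\mathbb{Z}/p_i^{\alpha_i}\mathbb{Z}$ commutes with evaluation of an integer polynomial, so the solution set modulo $m$ is carried bijectively onto the product of the solution sets modulo the $p_i^{\alpha_i}$, giving the count $\prod_i s_i$. The paper itself supplies no proof for this lemma (it is quoted from Cohn's \emph{Advanced Number Theory}), and your CRT argument is precisely the standard proof one would find there, so there is nothing substantive to contrast.
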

We derive the following result using the same notations as in Lemma  (\ref{l4}).
\begin{proposition}
	The number of monomial $ n-$cycle permutation polynomials on $ F_{2^{m}} $ equals $ n^{t} $, where $ t $ is the number of all prime factors $ p_{i} $'s with $ n|(p_{i}-1) $ or $ \alpha_{i}\geq2  $ in the case $ p_{i}=n $.	
\end{proposition}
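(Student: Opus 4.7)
The plan is to combine Lemma \ref{l1} with the multiplicative CRT-style count of Lemma \ref{l4}. By Lemma \ref{l1}, a monomial $x^d$ is an $n$-cycle permutation polynomial of $F_{2^m}$ precisely when $d^n \equiv 1 \pmod{2^m-1}$, so the task reduces to counting solutions of this congruence among the residues modulo $2^m-1$.

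Next I would factor $2^m-1 = \prod_{i=1}^{r} p_i^{\alpha_i}$, matching the notation of Lemma \ref{l4}, and apply that lemma to $f(x)=x^n-1$. This gives that the total count equals $\prod_{i=1}^{r} s_i$, where $s_i$ is the number of solutions of $x^n \equiv 1 \pmod{p_i^{\alpha_i}}$. Since $2^m-1$ is odd, every $p_i$ is an odd prime, so the unit group $(\mathbb{Z}/p_i^{\alpha_i}\mathbb{Z})^*$ is cyclic of order $\varphi(p_i^{\alpha_i})=p_i^{\alpha_i-1}(p_i-1)$. In a cyclic group of order $N$, the set of $n$-th roots of unity has cardinality $\gcd(n,N)$, giving
$$s_i \;=\; \gcd\!\bigl(n,\ p_i^{\alpha_i-1}(p_i-1)\bigr).$$

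With $n$ prime, $s_i$ divides $n$ and is therefore either $1$ or $n$, and it equals $n$ precisely when $n \mid p_i^{\alpha_i-1}(p_i-1)$. I would split into two cases: if $p_i \neq n$, then $\gcd(n, p_i^{\alpha_i-1})=1$, so $s_i = n$ if and only if $n \mid p_i-1$; if $p_i = n$, then $n \nmid p_i-1 = n-1$, so $s_i = n$ if and only if $n \mid p_i^{\alpha_i-1}$, which is equivalent to $\alpha_i \geq 2$. Multiplying over $i$ yields $\prod_i s_i = n^t$, where $t$ counts exactly the prime factors described in the statement.

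The main step of substance is this case analysis for the distinguished prime $p_i = n$; the rest is routine structural theory for cyclic groups, together with the tacit hypothesis (needed to force each $s_i \in \{1,n\}$) that $n$ itself be prime. The remaining bookkeeping is just multiplying the local contributions using Lemma \ref{l4}.
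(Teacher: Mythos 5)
Your proof follows essentially the same route as the paper: reduce via Lemma \ref{l1} to counting solutions of $d^{n}\equiv 1 \pmod{2^{m}-1}$, localize with Lemma \ref{l4}, and use cyclicity of $(\mathbb{Z}/p_i^{\alpha_i}\mathbb{Z})^{*}$ for the odd primes $p_i$; your $\gcd$ formula and the explicit case split at $p_i=n$ just make the paper's count of ``elements of order $1$ or $n$'' more precise. Your remark that $n$ must tacitly be prime is correct and applies equally to the paper's own argument, which also needs it for each local count to be $1$ or $n$.
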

\begin{proof}
	We know that $ f(x)=x^{d} $ is $ n-$cycle permutation polynomial over $ F_{q^{m}} $ if and only if $ d^{n}\equiv1\pmod{ q^{m}-1} $. To determine the number of solutions to $ d^{n}\equiv1\pmod{ q^{m}-1} $, by Lemma (\ref{l4}) it is sufficient to find number of solutions to $ d^{n}\equiv1\pmod {p_{i}^{\alpha_{i}}} $ for each $ i $ with $1\leq i\leq r$. Let $ Z_{p_{i}^{\alpha_{i}}} $	be the residue class ring of integers modulo $p_{i}^{\alpha_{i}}$ and $ Z_{p_{i}^{\alpha_{i}}}^{*} $ be its multiplicative group consisting of all invertible elements in $ Z_{p_{i}^{\alpha_{i}}} $. Here all $ p_{i} $ is an odd as $ 2^{m}-1 $ is odd. As a result, there is a primitive root modulo $ p_{i}^{\alpha_{i}} $ and $ Z_{p_{i}^{\alpha_{i}}}^{*} $ is a cyclic group. Then each solution of  $ d^{n}\equiv1\pmod {p_{i}^{\alpha_{i}}} $ is exactly an element having order 1 or n in the group $ Z_{p_{i}^{\alpha_{i}}}^{*} $. Thus, the number of solutions of $ d^{n}\equiv1\pmod {p_{i}^{\alpha_{i}}} $  equals to $ n $ if $ n|\phi(p_{i}^{\alpha_{i}}) $ and 1 otherwise. 
\end{proof}
\begin{remark}
	If $ 2^{m}-1 $ is the prime number (Mersenne prime), the number of $ n-$cycle permutation polynomials having the form $ f(x)=x^{d} $ over $ F_{2^{m}} $ equals to 1 if $ n\not|(2^{m}-2) $ and $ n $ if $ n|(2^{m}-2) $.	
\end{remark}

Now we consider the monomials with varying  $ d $ over $ F_{2^{m}} $ and discuss the properties with respect to  $ n-$cycle permutation polynomials. The function $ f(x)=x^{2^{2k}-2^{k}+1} $ is a familiar one and is known as the Kasami power function.
\begin{proposition} For even $ m$,  $f(x)=x^{2^{2k}-2^{k}+1}  $ is an $ n-$cycle permutation polynomial over $ F_{2^{m}} $ if and only if $ 2^{m}-1 $ divides $ 2^{k}-1 $ i.e., $m$ devides $k$.
	
\end{proposition}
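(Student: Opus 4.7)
The plan is to reduce the cycle condition to a congruence on the Kasami exponent $d = 2^{2k} - 2^k + 1$ via Lemma~\ref{l1} and then extract the divisibility $m \mid k$ from that congruence. By Lemma~\ref{l1}, $f(x) = x^d$ is an $n$-cycle permutation polynomial over $F_{2^m}$ precisely when $d^n \equiv 1 \pmod{2^m - 1}$, so the entire statement becomes a purely arithmetic claim about the multiplicative behaviour of $d$ modulo $2^m - 1$.

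For the sufficiency direction I would note that if $m \mid k$ then $2^k \equiv 1 \pmod{2^m - 1}$, so $d \equiv 1 - 1 + 1 = 1 \pmod{2^m - 1}$, and hence $d^n \equiv 1 \pmod{2^m - 1}$ for every $n \geq 1$. This already makes $f$ equal to the identity on $F_{2^m}$, which is trivially an $n$-cycle permutation.

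For the necessity direction the key algebraic move is to write $d - 1 = 2^{2k} - 2^k = 2^k(2^k - 1)$. Since $\gcd(2^k, 2^m - 1) = 1$, the congruence $d \equiv 1 \pmod{2^m - 1}$ is equivalent to $(2^m - 1) \mid (2^k - 1)$, which by the standard identity $\gcd(2^a - 1, 2^b - 1) = 2^{\gcd(a,b)} - 1$ is equivalent to $m \mid k$. So the heart of the proof is to show that $d^n \equiv 1 \pmod{2^m - 1}$ forces $d \equiv 1 \pmod{2^m - 1}$.

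This last reduction is the step I expect to be the main obstacle: a priori any unit modulo $2^m - 1$ has some finite multiplicative order, so one has to use extra structure to collapse that order to $1$. I would exploit the auxiliary identity $(2^k + 1) d = 2^{3k} + 1$, together with the hypothesis that $m$ is even, to control $\gcd(2^k + 1, 2^m - 1)$ and $\gcd(2^{3k} + 1, 2^m - 1)$; the evenness of $m$ is what lets one handle the $2$-valuation inside $(\mathbb{Z}/(2^m-1))^\times$ and pin down the order of $d$. Finalising this order calculation and combining it with the factorisation above is where the real work lies; once done, the divisibility $m \mid k$ follows from the cyclotomic identity cited above.
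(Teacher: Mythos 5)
Your reduction via Lemma~\ref{l1} and your treatment of the sufficiency direction are fine, and your factorisation $d-1=2^{k}(2^{k}-1)$ together with $\gcd(2^{a}-1,2^{b}-1)=2^{\gcd(a,b)}-1$ correctly shows that $d\equiv 1\pmod{2^{m}-1}$ is equivalent to $m\mid k$; this part is actually more explicit than what the paper writes. But you have correctly located, and then left open, the only step that matters: showing that $d^{n}\equiv 1\pmod{2^{m}-1}$ forces $d\equiv 1\pmod{2^{m}-1}$. Your plan to pin down the order of $d$ through the identity $(2^{k}+1)d=2^{3k}+1$ and the evenness of $m$ is not carried out, and it cannot be carried out, because the implication is false. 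Take $m=4$ and $k=2$, so $2^{m}-1=15$ and $d=2^{4}-2^{2}+1=13\equiv -2\pmod{15}$; then $d^{2}\equiv 4$ and $d^{4}\equiv 1\pmod{15}$, so $x^{13}$ is a quadruple-cycle permutation of $F_{2^{4}}$ (hence an $n$-cycle for $n=4$), yet $4\nmid 2$. So for $n$ divisible by $4$ the ``only if'' direction of the proposition fails, and no refinement of the order computation will close the gap.

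For comparison, the paper's own proof does not close it either: it simply asserts that $d^{n}\equiv 1\pmod{2^{m}-1}$ ``is possible if and only if $2^{m}-1$ divides $2^{k}-1$'' with no argument, which is precisely the unproved (and, as above, generally false) step you flagged. The statement is salvageable only under a reading in which $f$ must be an $n$-cycle for \emph{every} $n$ (equivalently $f=I$), and under that reading your sufficiency argument plus the factorisation of $d-1$ already constitutes a complete proof. As written, however, your proposal is an honest but incomplete sketch of a claim whose necessity direction does not hold for general $n$.
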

\begin{proof}
	Using the Lemma (\ref{l1}), we know that $ f(x)=x^{d} $ is an $ n-$cycle permutation polynomial over $ F_{2^{m}} $ if and only if $ d^{n}\equiv1\pmod{2^{m}-1} $. It is enough  to show that $ f(x)=x^{2^{2k}-2^{k}+1} $ is an $ n-$cycle permutation polynomial over $ F_{2^{m}} $ if and only if $ d=2^{2k}-2^{k}+1 $, whenever $ d^{n}\equiv1\pmod{2^{m}-1} $. This is possible if and only if $ 2^{m}-1 $ divides $ 2^{k}-1 $ i.e., $m$ devides $k$.
\end{proof}
In the following result, we consider the Gold power function $f(x)=x^{2^{k}+1}  $ where $\gcd(k,m)=1$.  The proof follows the same reasoning as in the earlier proof.

\begin{proposition}  $f(x)=x^{2^{k}+1}  $ is an $ n-$cycle permutation polynomial over $ F_{2^{m}} $ if and only if $m=1$.
	
\end{proposition}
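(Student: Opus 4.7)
The plan is to mirror the argument of the preceding proposition on the Kasami power function, since the statement explicitly says the reasoning carries over. By Lemma~\ref{l1}, the monomial $f(x)=x^{2^k+1}$ is an $n$-cycle permutation polynomial of $F_{2^m}$ if and only if $(2^k+1)^n \equiv 1 \pmod{2^m-1}$ for some positive integer $n$. Following the same reduction used in the Kasami proof, I would collapse this congruence to the equivalent requirement $2^k + 1 \equiv 1 \pmod{2^m-1}$, that is, $2^m - 1 \mid 2^k$.

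The next and decisive step is a parity/coprimality observation: $2^m - 1$ is odd for every $m \ge 1$, while $2^k$ is a pure power of two, so $\gcd(2^m-1, 2^k) = 1$. Hence the divisibility $2^m-1 \mid 2^k$ forces $2^m - 1 = 1$, which gives $m=1$. This yields the forward direction of the equivalence.

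For the converse, I would verify directly that when $m=1$ the field reduces to $F_{2^m} = F_2 = \{0,1\}$, and evaluation gives $f(0) = 0$ and $f(1) = 1^{2^k+1} = 1$, so $f$ coincides with the identity map on $F_2$ and is therefore trivially an $n$-cycle permutation polynomial (in fact, a $1$-cycle) for every $n$.

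The main obstacle, as in the Kasami case, is the reduction from the abstract order condition $(2^k+1)^n \equiv 1 \pmod{2^m-1}$ to the concrete equality $2^k+1 \equiv 1 \pmod{2^m-1}$; this is the delicate step that the companion proof handles only briefly, and to make it airtight I would invoke the Gold hypothesis $\gcd(k,m)=1$ to control the multiplicative order of $2^k+1$ in $(\mathbb{Z}/(2^m-1)\mathbb{Z})^{*}$. Once that reduction is secured, the remainder is an elementary divisibility argument and a direct check in $F_2$.
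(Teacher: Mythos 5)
Your route is the same as the paper's: invoke Lemma~\ref{l1}, reduce the order condition to $2^{k}+1\equiv 1\pmod{2^{m}-1}$, and then use the parity observation that $2^{m}-1$ is odd while $2^{k}$ is a power of two, so the divisibility $2^{m}-1\mid 2^{k}$ forces $2^{m}-1=1$ and hence $m=1$. That parity step and your explicit check for $m=1$ (where $f$ is the identity on $F_{2}$) are both fine; the latter is in fact spelled out more carefully than in the paper, which simply asserts the conclusion.

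The genuine gap is precisely the step you yourself flag as delicate: the collapse from $(2^{k}+1)^{n}\equiv 1\pmod{2^{m}-1}$ to $2^{k}+1\equiv 1\pmod{2^{m}-1}$. This implication is false, and the repair you propose --- using the Gold hypothesis $\gcd(k,m)=1$ to control the multiplicative order of $2^{k}+1$ --- cannot close it, because under the natural existential reading of $n$ the proposition itself has counterexamples. Take $m=3$ and $k=1$, so $\gcd(k,m)=1$ and $d=2^{k}+1=3$: the order of $3$ modulo $2^{3}-1=7$ is $6$, so $x^{3}$ is a $6$-cycle permutation polynomial of $F_{2^{3}}$ (in the sense of Definition~\ref{d1}) even though $m\neq 1$. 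The only reading under which the collapse to $d\equiv 1$ is legitimate is that $f$ be an $n$-cycle permutation polynomial for \emph{every} positive integer $n$ (in particular $n=1$), which forces $f$ to be the identity; the paper's closing phrase ``for any positive integer $n$'' hints at this intention, but its own proof makes exactly the same unexplained leap from $d^{n}\equiv 1$ to $d\equiv 1$ that yours does. So your proposal reproduces the paper's argument, gap included; to make it airtight you must either restate the claim with the universal quantifier on $n$ (whereupon the reduction is immediate and the hypothesis $\gcd(k,m)=1$ plays no role) or accept that the statement as literally written is false.
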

\begin{proof}
    Using the Lemma (\ref{l1}), we know that $ f(x)=x^{d} $ is an $ n-$cycle permutation polynomial over $ F_{2^{m}} $ if and only if $ d^{n}\equiv1\pmod{2^{m}-1} $. Consider, $2^k+1\equiv 1 \pmod {2^m - 1} $ implies $2^k\equiv0 \pmod {2^m - 1}$. In other words, $2^k=t(2^m-1)$ for some positive integer $t$, which holds only when $m=1$. Hence, $f(x)=x^{2^{k}+1}  $ is an $ n-$cycle permutation polynomial over $ F_{2^{m}} $ if and only if $m=1$ for any positive integer $n$.
\end{proof}

Now, with the same technique used in \cite{niu2023more}, we prove $ F(x)=L(x)+\gamma h(Tr_{q^{m}/q}(x)) $ is an $ n-$cycle permutation polynomial.
\begin{corollary}\label{t3}
	Let linearized polynomial $ L(x)\in F_{q^{m}}[x] $ be an $ n-$cycle permutation polynomial over $ F_{q^{m}} $. Let $ h(x)\in F_{q}[x] $ and  $0\not=\gamma\in F_{q} $. The polynomial $ F(x)=L(x)+\gamma h(Tr_{q^{m}/q}(x)) $ satisfy condition $ Tr_{q^{m}/q}\circ F(x)=\bar{F}\circ Tr_{q^{m}/q}(x) $, with $ \bar{F}=L(x)+Tr_{q^{m}/q}(\gamma)h(x) $. Then, $ F(x) $ is a $ n -$ cycle permutation polynomial over $ F_{q^{m}} $ if and only if, 
	\begin{equation}\label{a1}
		\sum\limits_{i=0}^{m-1}L^{i}(h(\bar{F}^{(n-1-i)}(y)))=0 ,
	\end{equation}
	holds for any $ y \in Tr_{q^{m}/q}(F_{q^{m}}) $.
\end{corollary}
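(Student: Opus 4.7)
The plan is to write out $F^{(n)}(x)$ explicitly in terms of $L$, $\gamma$, $h$, $Tr_{q^{m}/q}$, and $\bar{F}$, and then read off the $n$-cycle condition. First I would verify that the semi-conjugacy $Tr_{q^{m}/q}\circ F = \bar{F}\circ Tr_{q^{m}/q}$ (which the corollary already records as a hypothesis) propagates to all iterates, so that $Tr_{q^{m}/q}(F^{(k)}(x)) = \bar{F}^{(k)}(Tr_{q^{m}/q}(x))$ for every $k\geq 0$ by a short induction. This is precisely what lets the inner trace coordinate be iterated independently of the full $F_{q^{m}}$-coordinate.

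Next I would prove, again by induction on $k$, the explicit iteration formula
\begin{equation*}
F^{(k)}(x) \;=\; L^{k}(x) \;+\; \sum_{i=0}^{k-1} L^{i}\bigl(\gamma\, h(\bar{F}^{(k-1-i)}(Tr_{q^{m}/q}(x)))\bigr).
\end{equation*}
The base case $k=1$ is the definition of $F$. For the inductive step I would apply $F = L + \gamma\, h\circ Tr_{q^{m}/q}$ to $F^{(k)}(x)$: the outer $L$ redistributes across the existing sum by $F_{q}$-linearity and shifts every $L^{i}$ to $L^{i+1}$, while the outer $\gamma\, h\circ Tr_{q^{m}/q}$ contributes the new summand $\gamma\, h(\bar{F}^{(k)}(Tr_{q^{m}/q}(x)))$ thanks to the semi-conjugacy established in step one; a reindexing then produces the formula at level $k+1$.

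Setting $k=n$ and using that $L$ is an $n$-cycle, so $L^{n}=I$, the identity $F^{(n)}(x)=x$ becomes equivalent to
\begin{equation*}
\sum_{i=0}^{n-1} L^{i}\bigl(\gamma\, h(\bar{F}^{(n-1-i)}(Tr_{q^{m}/q}(x)))\bigr) \;=\; 0 .
\end{equation*}
Because $\gamma\in F_{q}^{*}$ and $L$ is $F_{q}$-linear (the monomials $x^{q^{i}}$ fix $F_{q}$), the nonzero scalar $\gamma$ factors out of every $L^{i}$ and may be cancelled, and letting $y = Tr_{q^{m}/q}(x)$ range over $Tr_{q^{m}/q}(F_{q^{m}})$ recovers the stated condition in both directions.

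The main technical care lies in the bookkeeping of the induction: keeping the $L^{i}$ shift synchronised with the $\bar{F}^{(n-1-i)}$ argument after reindexing, and making sure that the $F_{q}$-linearity of $L$ and the semi-conjugacy are invoked at the correct step. No further algebraic input is required, and in particular the result is independent of the internal structure of $h$ beyond the fact that its values lie in $F_{q}$.
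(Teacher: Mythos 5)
Your proposal is correct and follows essentially the same route as the paper: iterate $F$ using the semi-conjugacy $Tr_{q^{m}/q}\circ F=\bar{F}\circ Tr_{q^{m}/q}$ to obtain $F^{(n)}(x)=L^{n}(x)+\gamma\sum_{i=0}^{n-1}L^{i}\bigl(h(\bar{F}^{(n-1-i)}(y))\bigr)$ and then impose $F^{(n)}=I$; your version is merely more careful about the induction and about factoring $\gamma$ out of $L^{i}$ by $F_{q}$-linearity. Note that your derivation (like the paper's own proof) yields the sum with upper limit $n-1$, whereas the corollary's displayed condition writes $m-1$; this appears to be a typographical slip in the statement rather than a defect in your argument.
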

\begin{proof}
	For any $ x\in F_{q^{m}} $ and $ y \in Tr_{q^{m}/q}(F_{q^{m}}) $ with $ y=Tr_{q^{m}/q}(x) $ we have,
	
	\begin{equation}
		\begin{split}
			F^{2} & =L(F(x))+\gamma(h\circ Tr_{q^{m}/q}\circ F(x))\\
			&=L^{2}(x)+\gamma[L(h\circ Tr_{q^{m}/q}(x))+(h\circ\bar{F}\circ Tr_{q^{m}/q}(x))]\\
			&=\gamma \sum\limits_{i=0}^{1}L^{i}(h\circ \bar{F}^{(1-i)}\circ Tr_{q^{m}/q}(x))+L^{2}(x).
		\end{split}
	\end{equation}
	In general for $ n $ compositions we have,
	\begin{equation}\label{a2}
		\begin{split}
			F^{n}&=\gamma\sum\limits_{i=0}^{n-1}L^{i}(h\circ \bar{F}^{(n-1-i)}\circ Tr_{q^{m}/q}(x))+L^{n}(x)\\
			&=\gamma\sum\limits_{i=0}^{n-1}L^{i}(h( \bar{F}^{(n-1-i)}(y)))+x.
		\end{split}
	\end{equation} 
	Thus, if Equation (\ref{a1}) holds for any $ y\in Tr_{q^{m}/q}(F_{q^{m}}) $, then $ F $ is an $ n-$cycle permutation polynomial over $ F_{q^{m}} $. 
	
	Conversely, assuming that $ F(x) $ is an $ n-$cycle permutation polynomial over $ F_{q^{m}} $, for all $ y\in Tr_{q^{m}/q}(F_{q^{m}}) $ there exist $ x\in F_{q^{m}} $ such that $ Tr_{q^{m}/q}(x)=y $. Then, we  have $\gamma\sum\limits_{i=0}^{m-1}L^{i}(h( \bar{F}^{(n-1-i)}(y)))=0$, which implies $$ \sum\limits_{i=0}^{m-1}L^{i}(h(\bar{F}^{(n-1-i)}(y)))=0.$$
\end{proof}
\begin{proposition}\label{p1}
	Let   $ L_{1}(x)=\sum\limits_{i=0}^{m-1}a_{i}x^{q^{i}} $ and $ L_{2}(x)=\sum\limits_{j=0}^{m-1}b_{j}x^{q^{j}} \in F_{q^{m}}[x]$ be linearized polynomials and $ \gamma\in F_{q^{m}} $ with $ \gamma\not=0 $. If $Tr_{q^{m}/q}(L_{2}(x))=0 $, then $ F(x)=L_{1}(x)+\gamma L_{2}(Tr_{q^{m}/q}(x)) $ is an $ n-$cycle permutation polynomial over $ F_{q^{m}} $.  
\end{proposition}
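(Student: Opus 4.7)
The strategy is to adapt the iteration machinery developed in Corollary \ref{t3} to the present setting, with $h = L_2$ and $\gamma \in F_{q^m}$ playing the role of the scalar, and to use the hypothesis $Tr_{q^m/q}(L_2(x)) \equiv 0$ to force all of the correction terms after iteration to collapse. First I would exploit that $Tr_{q^m/q}(x) \in F_q$: since $L_2$ is $F_q$-linear when restricted to arguments lying in $F_q$, one has $L_2(Tr_{q^m/q}(x)) = L_2(1)\cdot Tr_{q^m/q}(x)$. This already shows that $F$ is itself a linearized polynomial, namely
\[
F(x) = L_1(x) + \gamma L_2(1) \sum_{i=0}^{m-1} x^{q^i},
\]
which is the starting point for the iteration.

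Next I would iterate $F$ and track how the trace correction propagates, mimicking the derivation of Equation (\ref{a2}). Using the additivity of $L_1$, the scalar-pull-out identity $L_1(\alpha y) = y L_1(\alpha)$ valid for $y \in F_q$, and the compatibility of $Tr_{q^m/q}$ with the $F_q$-action, a direct but tedious computation produces a formula of the shape
\[
F^{(n)}(x) = L_1^{(n)}(x) + \gamma \sum_{i=0}^{n-1} L_1^{(i)}\bigl(L_2(\bar F^{(n-1-i)}(Tr_{q^m/q}(x)))\bigr),
\]
where $\bar F$ is the induced companion map on $Tr_{q^m/q}(F_{q^m}) = F_q$, determined in the same way as in Corollary \ref{t3}. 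At this point the hypothesis $Tr_{q^m/q}(L_2(y)) = 0$ for all $y$ is invoked: it says exactly that the image of $L_2$ is contained in $\ker Tr_{q^m/q}$, and iterating $\bar F$ preserves this structure; combining these observations should make the entire correction sum vanish, leaving $F^{(n)}(x) = L_1^{(n)}(x)$. Invoking that $L_1$ is an $n$-cycle permutation polynomial (inherited from the preceding Theorem \ref{t2} context) gives $L_1^{(n)}(x) = x$ and finishes the argument.

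The main obstacle is the second step. Because $\gamma \in F_{q^m}$ rather than $F_q$, the scalar does not commute nicely with $Tr_{q^m/q}$, so $Tr_{q^m/q}(\gamma L_2(z))$ does not reduce to $\gamma \cdot Tr_{q^m/q}(L_2(z))$, and the clean cancellation enjoyed in Corollary \ref{t3} is no longer automatic. Showing that the iterated sum really does collapse will require a careful application of the identity $T(L_2(y))=0$ at every composition level, perhaps packaged as an auxiliary lemma describing how the $F_q$-subspace $\ker Tr_{q^m/q}$ interacts with multiplication by $\gamma$ and with composition by $L_1$.
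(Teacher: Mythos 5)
Your plan follows the same route as the paper's own proof: specialize Corollary \ref{t3} with $h=L_{2}$, check that the companion map $\bar{F}$ on $Tr_{q^{m}/q}(F_{q^{m}})$ collapses to $L_{1}$, and then argue that the correction sum $\sum_{k}L_{1}^{(k)}\bigl(L_{2}(\bar{F}^{(n-1-k)}(y))\bigr)$ vanishes. Your opening observation that $L_{2}(Tr_{q^{m}/q}(x))=L_{2}(1)\,Tr_{q^{m}/q}(x)$, so that $F$ is itself a linearized polynomial, is correct and is a simplification the paper does not exploit. Note also that, read literally, the statement never assumes $L_{1}$ is an $n$-cycle permutation polynomial; like the paper, you have to import that hypothesis from Corollary \ref{t3} before the argument can even begin.

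The genuine gap is that the decisive step is never carried out: your last paragraph names the obstacle and defers it to an unspecified auxiliary lemma. That is precisely the step the paper supplies, by a mechanism you do not mention, namely the commutativity of symbolic multiplication (composition) of linearized polynomials: it is used first to rewrite $L_{1}^{(k)}\circ L_{2}\circ L_{1}^{(n-1-k)}$ as $L_{2}\circ L_{1}^{(n-1)}$, and then to interchange $Tr_{q^{m}/q}$ with $L_{2}$ so that the hypothesis $Tr_{q^{m}/q}(L_{2}(x))=0$ annihilates every summand. Without this (or a substitute) your argument does not close. Moreover, your suspicion about where the difficulty lies is well founded and cannot be engineered away: composition of $q$-linearized polynomials is commutative only when the coefficients lie in $F_{q}$, whereas here $L_{2}$ has coefficients in $F_{q^{m}}$ and $\gamma\in F_{q^{m}}$. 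Concretely, take $q=2$, $m=2$, $L_{1}(x)=x$, $L_{2}(x)=\omega x+\omega^{2}x^{2}$ with $\omega$ a primitive element of $F_{2^{2}}$, and $\gamma=\omega$; then $Tr_{q^{m}/q}(L_{2}(x))=0$ for all $x$, yet $F(x)=x+\omega\,Tr_{q^{m}/q}(x)$ maps both $0$ and $\omega$ to $0$, so $F$ is not even a permutation. Thus the "careful application" you hope for would have to come with additional hypotheses (for instance $\gamma\in F_{q}$ together with conditions forcing $Tr_{q^{m}/q}\circ L_{2}=L_{2}\circ Tr_{q^{m}/q}$), not merely a more careful computation; the same caveat applies to the paper's own use of commutativity at this point.
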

\begin{proof}
	Comparing with the previous result we have  $ L_{1}(x)=L(x) $ and $ L_{2}(x)=h(x) $. Clearly, $ Tr_{q^{m}/q}\circ F(x)=\bar{F}\circ Tr_{q^{m}/q}(x) $ where $ \bar{F}=L_{1}(x)+Tr_{q^{m}/q}(\gamma L_{2}(x)) $. It is straightforward to verified that, if $ Tr_{q^{m}/q}(L_{2}(x))=0 $, then $ \bar{F}(x)=L_{1}(x) $. Similar verification implies that $ \bar{F}^{(n-1-k)}(x)=L_{1}^{(n-1-k)}(x) $.
	
	Thus, for any $ x\in F_{q^{m}} $, there exists $ y\in F_{q} $ such that $ Tr_{q^{m}/q}(x)=y $ and
	\begin{equation}
 \begin{split}
     \sum\limits_{k=0}^{m-1}L_{1}^{k}(L_{2}(L_{1}^{(n-1-k)}(y)))
		&=\sum\limits_{k=0}^{m-1}Tr_{q^{m}/q}(L_{2}(L_{1}^{(n-1)}(x)))\\
		&=\sum\limits_{k=0}^{m-1}L_{1}^{(n-1)}(Tr_{q^{m}/q}(L_{2}(x))\\
		&=0.
 \end{split}
	\end{equation}
	The above equality is true due to the fact that symbolic multiplication of linearized polynomials is commutative, and  if  $ Tr_{q^{m}/q}(L_{2}(x))=0 $. Thus, from Equation (\ref{a2}) $ F^{n}(x)=x $.
\end{proof}

In next theorem we prove $F(x)=G(x)+\gamma f(x)  $ is an $ n-$cycle permutation polynomial over $ F_{2^{m}} $, where $ f $ is a Boolean function over $ F_{2^{m}} $. Here we use the fact that,
any function $ f:F_{2^{m}}\rightarrow F_{2^{m}}  $ can be   represented uniquely as $ f(x)=\sum\limits_{i=0}^{2^{m}-1}a_{i}x^{i}$; $ a_{i},x\in F_{2^{m}} $ and $ f $ is a Boolean function if and only if $ f(x)^{2}=f(x) ,\forall x\in F_{2^{m}}$. The linear property of a Boolean function is also made use of in the result. 
\begin{theorem}\label{t4}
	Let $ G(x) $ be an $ n-$cycle permutation polynomial over $ F_{2^{m}} $ and $ f $ be a Boolean function with $ f=f\circ G $, $\gamma\in F_{q^{m}}^{*}$. Then, $ F(x)=G(x)+\gamma f(x) $ is an $ n-$cycle permutation polynomial over $ F_{2^{m}} $ if and only if the following two conditions are satisfied:
	\begin{enumerate}
		\item $\gamma$ is a $ 0-$linear structure of $ f $;
		\item $ G^{(n-1)}(x+\gamma)=G(G(\cdots(G(G(x)+\gamma )+\gamma )\cdots)+\gamma )+\gamma  $ for each \\$x\in S_{1}=\{x\in F_{2^{m}}|f(x)=1\}$.
	\end{enumerate}
\end{theorem}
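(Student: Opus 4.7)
The plan is to break the equivalence into two stages. First, use Lemma~\ref{l2} to identify condition~(1) with the assertion that $F$ is a permutation. Second, under condition~(1), compute $F^{(n)}$ explicitly on each level set of $f$ and match the result on $S_{1}$ with condition~(2).

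For the first stage, Lemma~\ref{l2} says that $F(x)=G(x)+\gamma f(x)$ is a permutation of $F_{2^{m}}$ iff $\gamma$ is a $0$-linear structure of $f\circ G^{-1}$. The hypothesis $f=f\circ G$ gives $f\circ G^{-1}=f$, so this collapses to condition~(1). Hence condition~(1) is equivalent to $F$ being a permutation, a prerequisite in either direction of the theorem.

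For the second stage, the key observation is that, under condition~(1), the value of $f$ is invariant along $F$-orbits: $f(F(x))=f(x)$ for every $x$. Indeed, if $f(x)=0$ then $F(x)=G(x)$ and $f(F(x))=f(G(x))=f(x)$ by $f=f\circ G$; if $f(x)=1$ then $F(x)=G(x)+\gamma$ and $f(F(x))=f(G(x)+\gamma)=f(G(x))=f(x)$, the middle equality using that $\gamma$ is a $0$-linear structure of $f$. Iterating gives $f\circ F^{(k)}=f$ for every $k\ge 0$. I expect the main point of the argument to be making this invariance rigorous, since everything that follows is bookkeeping that uses it to keep the two cases separate under iterated composition.

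Using this invariance, an induction on $k$ shows that $F^{(k)}(x)=G^{(k)}(x)$ on $\{f=0\}$ and $F^{(k)}(x)=y_{k}$ on $S_{1}$, where $y_{0}=x$ and $y_{j+1}=G(y_{j})+\gamma$. On $\{f=0\}$ the identity $F^{(n)}=I$ is then automatic because $G$ is $n$-cycle. On $S_{1}$, requiring $F^{(n)}(x)=x$ reads $G(y_{n-1})+\gamma=x$, equivalently $y_{n-1}=G^{-1}(x+\gamma)=G^{(n-1)}(x+\gamma)$, where $G^{-1}=G^{(n-1)}$ because $G$ is $n$-cycle. Unfolding $y_{n-1}$ as the nested expression $G(\cdots G(G(x)+\gamma)+\gamma\cdots)+\gamma$ with $n-1$ copies of $G$ and $n-1$ additions of $\gamma$ recovers condition~(2) exactly. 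Combining with the first stage yields the biconditional in both directions.
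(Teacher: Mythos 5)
Your proposal is correct and follows essentially the same route as the paper: condition (1) is extracted from Lemma~\ref{l2} together with $f\circ G^{-1}=f$, and condition (2) comes from the same case analysis on $S_{1}$ versus its complement, driven by the invariance $f\circ F=f$ and yielding the same nested expression $G(\cdots(G(G(x)+\gamma)+\gamma)\cdots)+\gamma$. The only real difference is the endgame: the paper invokes Lemma~\ref{l3} to compute $F^{-1}=G^{-1}(x+\gamma f(x))$ and then checks $F^{(n-1)}=F^{-1}$, whereas you verify $F^{(n)}=I$ directly and translate $G(y_{n-1})+\gamma=x$ into $y_{n-1}=G^{-1}(x+\gamma)=G^{(n-1)}(x+\gamma)$; this is mathematically equivalent but slightly more self-contained, since it dispenses with the explicit inverse formula, and your explicit statement of the orbit-invariance $f\circ F^{(k)}=f$ makes rigorous a step the paper leaves implicit in its iterated computation.
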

\begin{proof}
	If $ f\circ G=f $ and  $ \gamma $ is a $ 0-$linear structure of a Boolean function  $ f $, then $ f(x)+f(x+\gamma)=0 $. Being a Boolean function  $ f(x) $ is linear over $F_{2^{m}}$. Using the fact that,
	any function $ f:F_{2^{m}}\rightarrow F_{2^{m}}  $ can be   represented uniquely as $ f(x)=\sum\limits_{i=0}^{2^{m}-1}a_{i}x^{i} ; a_{i},x\in F_{2^{m}} $, and $ f $ is a Boolean if and only if $ f(x)^{2}=f(x) ,\forall~ x\in F_{2^{m}}$. Hence,
	\begin{equation}
		F\circ F(x)= G[G(x)+\gamma f(x)]+\gamma f[G(x)+\gamma f(x)],
	\end{equation}
    It can be verified that
	\begin{equation}\label{eq11}
		F\circ F(x)=\begin{cases}
			G(G(x)) & \text{if } x\in F_{2^{m}}\backslash S_{1},\\
			G(G(x)+\gamma)+\gamma  f(G(x)+\gamma) & \text{if } x\in S_{1}.	
		\end{cases}
	\end{equation}
    $\gamma$ is $0-$linear structure of $f$ implies $f(G(x)+\gamma)+f(G(x))=0$ for $x\in S_1$. Hence equation (\ref{eq11}) reduces to,
    \begin{equation}\label{eq12}
		F\circ F(x)=\begin{cases}
			G(G(x)) & \text{if } x\in F_{2^{m}}\backslash S_{1},\\
			G(G(x)+\gamma)+\gamma   & \text{if } x\in S_{1}.	
		\end{cases}
	\end{equation}
	Similarly,
	\begin{equation}
		F\circ F\circ F(x)=G(G(G(x)+\gamma f(x))+\gamma f(x))+\gamma f(x).
	\end{equation}
	\begin{equation}
		F\circ F\circ F(x)=\begin{cases}
			G(G(G(x))) & \text{if } x\in F_{2^{m}}\backslash S_{1},\\
			G(G(G(x)+\gamma)+\gamma)+\gamma & \text{if } x\in S_{1}.
		\end{cases}
	\end{equation}
	Proceeding further similarly,
	\begin{equation}
		F^{(n-1)}(x)=G\{G(\cdots(G(G(x)+\gamma f(x) )+\gamma f(x) )\cdots)+\gamma f(x)\}+\gamma f(x). 
	\end{equation}
	\begin{equation}
		F^{(n-1)}(x)=\begin{cases}
			G^{(n-1)}(x) & \text{if } x\in F_{2^{m}}\backslash S_{1},\\
			G(G(G\cdots(G(G(x)+\gamma )+\gamma )\cdots+\gamma)+\gamma )+\gamma & \text{if } x\in S_{1}.
		\end{cases}
	\end{equation}
	By the Lemma (\ref{l2}), if  $ \gamma  $ is a $0-$linear structure of $ f $   implies that $ F(x) $ is a permutation polynomial. And by the Lemma (\ref{l3}),
    
    $ F^{-1}(x) $ does exist and is represented by  $ F^{-1}=G^{-1}\circ S $, where $ S=x+\gamma f(G^{-1}(x)) $. Thus,
	\begin{equation}
		F^{-1}=G^{-1}(x+\gamma f(G^{-1}(x)))\\
		=G^{-1}(x+\gamma f).
	\end{equation}
	That can be further simplified as,
	\begin{equation}
		F^{-1}=\begin{cases}
			G^{-1}(x) & \text{if } x\in F_{2^{m}}\backslash S_{1},\\
			G^{-1}(x+\gamma) & \text{if } x\in S_{1}.
		\end{cases}
	\end{equation}
	Since $ G(x) $ is an $ n-$cycle permutation polynomial, it follows that $ G^{(n-1)}(x)=G^{-1}(x) $. After substituting this, the above form of $F^{-1}$ simplifies to:
	\begin{equation}
		F^{-1}=\begin{cases}
			G^{(n-1)}(x) & \text{if } x\in F_{2^{m}}\backslash S_{1},\\
			G^{(n-1)}(x+\gamma) & \text{if } x\in S_{1}.
		\end{cases}
	\end{equation}
	If $G^{(n-1)}(x+\gamma)=G(G(\cdots(G(G(x)+\gamma )+\gamma )\cdots)+\gamma )+\gamma  $ for each $x\in S_{1}$, then $ F^{(n-1)}(x)=F^{-1}(x) $.
	
	Conversely, if $ F(x) $ is an $ n-$cycle permutation polynomial over $ F_{2^{m}} $ by the Lemma (\ref{l2}), $ \gamma $ is a $0-$linear structure of $ f\circ G^{-1} $. Since $ G $ is an $ n-$cycle permutation polynomial over $ F_{2^{m}} $, we have $ f\circ G=f $. Then, $ f\circ G^{-1}=f\circ G ^{(n-1)}=f\circ G \circ G^{(n-2)}=f\circ G^{(n-2)}=\ldots=f $. Thus, if $ \gamma $ is a $0-$linear structure of $ f(x) $,  then $ f(G(x))+f(G(x)+\gamma)=0 $ for any $ x\in S_{1} $. As a result, we obtain
	\begin{equation}
		\begin{split}
			&G\{G(\cdots(G(G(x)+\gamma f(x) )+\gamma f(x) )\cdots)+\gamma f(x)\}+\gamma f(x)=\\ &G(G(\cdots(G(G(x)+\gamma )+\gamma )\cdots)+\gamma )+\gamma.
		\end{split}
	\end{equation}
	Since $ F(x) $ is an $ n-$cycle permutation polynomial, $ F^{(n-1)}(x)=F^{-1}(x) $. This further implies that $G^{(n-1)}(x+\gamma)=G(G(\cdots(G(G(x)+\gamma )+\gamma )\cdots)+\gamma )+\gamma  $ for each $x\in S_{1}$.
\end{proof}
\begin{remark}
	If the second condition in the above theorem is replaced by $ G(x+\gamma)=G(x)+\gamma $, then  $ F(x)=G(x)+\gamma f(x) $ can not be an $ n-$cycle permutation polynomial over $ F_{2^{m}} $.
\end{remark}
\section{Some more $ n-$cycle permutation polynomials}
In this section, we consider $ n-$cycle permutation polynomials for $ n<6 $. 
\begin{proposition}\label{c1}
	Let linearised polynomial $ L(x)\in F_{q^{m}}[x] $ be an involution over $ F_{q^{m}} $, and $ h(x)\in F_{q}[x] $. Let $ 0\not=\gamma\in F_{q} $ and $ F(x)=L(x)+\gamma h(Tr_{q^{m}/q}(x)) $ satisfying $ Tr_{q^{m}/q}\circ F(x)=\bar{F}\circ Tr_{q^{m}/q}(x) $, with $ \bar{F}=L(x)+Tr_{q^{m}/q}(\gamma)h(x) $, then $ F(x) $ is an involution over $ F_{q^{m}} $ if $ Tr_{q^{m}/q}(F_{q^{m}})\subseteq Ker(h) $.
\end{proposition}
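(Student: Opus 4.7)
The plan is to derive this as a direct specialization of Corollary \ref{t3} at $n = 2$. Since $L$ is assumed to be an involution (i.e., a $2$-cycle permutation) and the intertwining relation $Tr_{q^m/q}\circ F = \bar{F}\circ Tr_{q^m/q}$ is given in the hypothesis, Corollary \ref{t3} applies and tells us that $F(x) = L(x) + \gamma h(Tr_{q^m/q}(x))$ is an involution if and only if
\begin{equation*}
\sum_{i=0}^{1} L^{i}\bigl(h(\bar{F}^{(1-i)}(y))\bigr) \;=\; h\bigl(\bar{F}(y)\bigr) + L\bigl(h(y)\bigr) \;=\; 0
\end{equation*}
for every $y \in Tr_{q^m/q}(F_{q^m})$. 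The whole proof therefore reduces to showing that, under the kernel hypothesis, each of these two summands vanishes identically on $Tr_{q^m/q}(F_{q^m})$.

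The first summand is immediate: the assumption $Tr_{q^m/q}(F_{q^m}) \subseteq \ker(h)$ gives $h(y) = 0$, and since $L$ is linearized (so $L(0) = 0$), also $L(h(y)) = 0$. For the second summand I will need the observation that $\bar{F}$ stabilizes the image $Tr_{q^m/q}(F_{q^m})$. This follows directly from the intertwining relation: given $y \in Tr_{q^m/q}(F_{q^m})$ write $y = Tr_{q^m/q}(x)$, and then $\bar{F}(y) = \bar{F}(Tr_{q^m/q}(x)) = Tr_{q^m/q}(F(x)) \in Tr_{q^m/q}(F_{q^m}) \subseteq \ker(h)$. Hence $h(\bar{F}(y)) = 0$ as well.

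Combining these two vanishing statements gives the condition required by Corollary \ref{t3}, so $F^{(2)} = I$, i.e., $F$ is an involution. The main (mild) obstacle is noticing that the kernel hypothesis propagates not just through the naked $h(y)$ term but also through $h(\bar{F}(y))$; this propagation is precisely what the invariance of $Tr_{q^m/q}(F_{q^m})$ under $\bar{F}$ — a consequence of the conjugation-type identity $Tr_{q^m/q}\circ F = \bar{F}\circ Tr_{q^m/q}$ — is doing for us. Once this invariance is identified, everything else is a one-line substitution into Corollary \ref{t3}.
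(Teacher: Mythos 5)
Your proof is correct and follows essentially the same route as the paper: the paper proves Proposition \ref{c1} by computing $F\circ F(x)=x+\gamma\bigl[L(h(Tr_{q^{m}/q}(x)))+h(Tr_{q^{m}/q}(F(x)))\bigr]$ directly and noting that both bracketed terms vanish under the kernel hypothesis, which are exactly the two summands $L(h(y))$ and $h(\bar{F}(y))$ you obtain by specializing Corollary \ref{t3} to $n=2$. Your only departure is presentational — you cite the general $n$-cycle criterion (whose proof at $n=2$ is precisely the paper's computation) instead of recomputing the composition, and your observation that $\bar{F}$ stabilizes $Tr_{q^{m}/q}(F_{q^{m}})$ is the same fact the paper uses implicitly when it notes $Tr_{q^{m}/q}(F(x))\in Ker(h)$.
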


\begin{proof}
	\begin{equation}
		\begin{split}
			F\circ F &=F[L(x)+\gamma (h\circ Tr_{q^{m}/q}(x))]\\
			&=L^{2}(x)+\gamma L(h(Tr_{q^{m}/q}(x)))+\gamma h(Tr_{q^{m}/q}(F(x)))\\
			&=x+\gamma[L(h(Tr_{q^{m}/q}(x)))+h(Tr_{q^{m}/q}(F(x)))].
		\end{split}
	\end{equation}
	If $ Tr_{q^{m}/q}(F_{q^{m}})\subseteq Ker(h) $, then $ F\circ F(x)=x $.
\end{proof}
In the next results, we prove that $F(x)=x^{d}+\gamma f(x)$ is a quadruple and quintuple over $ F_{2^{m}} $ by considering $ G(x)=x^{d} $ and assuming that it is $ 4^{th} $ and $ 5^{th} $ cycle permutation polynomial respectively.
\begin{proposition}\label{c2}
	Let $ \gamma\in F_{2^{m}}^{*}$, $ d^{4}\equiv1(\mod2^{m}-1) $ and $ f $ be the Boolean function over $ F_{2^{m}} $ such that $ f(x)=f(x^{d}) $ for all $ x\in F_{2^{m}} $. Then, $F(x)=x^{d}+\gamma f(x)$ is a quadruple permutation polynomial over $ F_{2^{m}} $ if and only if
	\begin{enumerate}
		\item $\gamma$ is a $0-$linear structure of $ f(x) $;
		\item $ \gamma^{d^{3}}+\gamma^{d^{2}}+\gamma^{d}+\gamma=0 $ and\\
		$ \sum\limits_{0<j<d^{2}}\gamma^{d^{2}-j}x^{dj}+\sum\limits_{0<j<d}\gamma^{dj}x^{d^{2}j} +\sum\limits_{0<j<d}\gamma^{dj+d-j}+\sum\limits_{0<j<d}\sum\limits_{0<k<d^{2}}\gamma^{d-j+dj-k}x^{dk}\\=\sum\limits_{0<j<d^{3}}\gamma^{d^{3}-j}x^{j}$.
	\end{enumerate} 
\end{proposition}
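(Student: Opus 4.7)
The plan is to apply Theorem \ref{t4} directly with $G(x) = x^{d}$ and $n = 4$. Since $d^{4} \equiv 1 \pmod{2^{m}-1}$, Lemma \ref{l1} guarantees that $G$ is a quadruple permutation polynomial over $F_{2^{m}}$, and the hypothesis $f(x) = f(x^{d})$ is precisely the condition $f = f\circ G$ required by Theorem \ref{t4}. Consequently, Theorem \ref{t4} immediately reduces the question to verifying that $F(x) = x^{d} + \gamma f(x)$ is a quadruple permutation polynomial if and only if $\gamma$ is a $0$-linear structure of $f$ (which is item (1) of the proposition), together with the compositional identity $G^{3}(x+\gamma) = G(G(G(x)+\gamma)+\gamma)+\gamma$ holding on $S_{1} = \{x \in F_{2^{m}} \mid f(x) = 1\}$.

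The crux is therefore to translate this compositional identity, specialized to $G(x) = x^{d}$, into the two equations stated in item (2). I would substitute $G(x)=x^{d}$ to rewrite the left-hand side as $(x+\gamma)^{d^{3}}$ and the right-hand side as $((x^{d}+\gamma)^{d}+\gamma)^{d}+\gamma$, then expand each by the binomial theorem in characteristic $2$ (so that many cross-terms cancel pairwise). Comparing the resulting expressions, the terms independent of $x$ produce the scalar identity $\gamma^{d^{3}} + \gamma^{d^{2}} + \gamma^{d} + \gamma = 0$, which is the first equation of item (2). Collecting the $x$-dependent contributions according to the three layers of nesting on the right-hand side yields the four-summand polynomial identity displayed in item (2), whose right-hand side $\sum_{0<j<d^{3}} \gamma^{d^{3}-j} x^{j}$ is exactly the binomial expansion of $(x+\gamma)^{d^{3}}$ with the constant term already isolated.

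The main obstacle is the combinatorial bookkeeping of the three nested $d$-th powers on the right: the first expansion produces a single sum indexed by $j$, the second a double sum, and the third a triple sum, with a nontrivial pattern of cancellation in characteristic $2$. One must carefully track which cross-terms of the shapes $\gamma^{d^{2}-j}x^{dj}$, $\gamma^{dj}x^{d^{2}j}$, $\gamma^{dj+d-j}$, and $\gamma^{d-j+dj-k}x^{dk}$ actually survive and match them against the single-layer expansion on the left. Once this expansion is carried out cleanly, the forward direction follows by reading off the two equalities from coefficient comparison, and the converse is immediate: if items (1) and (2) both hold, then the compositional identity demanded by Theorem \ref{t4} is satisfied on $S_{1}$, so Theorem \ref{t4} concludes that $F$ is a quadruple permutation polynomial over $F_{2^{m}}$.
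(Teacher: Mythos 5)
Your proposal is correct and takes essentially the same route as the paper. The paper's proof likewise reduces the problem to comparing $((x^{d}+\gamma)^{d}+\gamma)^{d}+\gamma$ with $(x+\gamma)^{d^{3}}$ on $S_{1}$ and reading off the two conditions of item (2) by matching constant and $x$-dependent terms; the only cosmetic difference is that it re-derives the relevant part of Theorem \ref{t4} in place (computing $F^{3}$ case by case and $F^{-1}$ via Lemma \ref{l3}) rather than citing Theorem \ref{t4} as a black box as you do.
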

\begin{proof}
	The congruence	$ d^{4}\equiv1(\mod2^{m}-1) $ implies that $ \gcd(2^{m}-1,d)=1 $. Hence $ d^{-1}(\mod  2^{m}-1 ) $ exists  and $ G(x) $ is a permutation polynomial over $ F_{2^{m}} $, which means that $ d^{3}=d^{-1}(\mod2^{m}-1) $. Then we have,
	
	\begin{equation}
		f(x^{d^{-1}})=f(x^{d^{(n-1)}})=f(x^{d})=f(x).
	\end{equation}
	From Theorem (\ref{t1})
	\begin{equation}
		F^{3}(x)=G(G(G(x)+\gamma f(x))+\gamma f(x))+\gamma f(x).
	\end{equation}
	By substituting $ G(x)=x^{d} $ we get,
	\begin{equation}
		F^{3}(x)=(((x)^{d}+\gamma f(x))^{d}+\gamma f(x))^{d}+\gamma f(x).
	\end{equation}
	Using the expression, $ F^{2}(x)=(x^{d}+\gamma f(x))^{d}+\gamma f(x^{d}+\gamma f(x)) $ we get,
	\begin{equation}
		F^{3}(x)=[(x^{d}+\gamma f(x))^{d}+\gamma f(x^{d}+\gamma f(x))]^{d}+\gamma f[(x^{d}+\gamma f(x))^{d}+\gamma f(x^{d}+\gamma f(x))].\\
	\end{equation}
	If $ x\in F_{2^{m}}\backslash S_{1} $, we have $ f(x^{d}+\gamma f(x))=0 $ which gives $ F^{3}(x)=x^{d^{3}} $.
	
	If $ x\in S_{1} $, then $ f(x+\gamma)=1 $. Moreover, since $ G(x)=x^{d} $ is a quadruple, we have $ f=f\circ G $ and $ f(x)=f(x^{d}) $ which imply $ \gamma $ is a $0-$linear structure of $ f $. Then, $ f(x^{d}+\gamma)=1 $.
	
	Now,
	\begin{equation}
		\begin{split}
			F^{3}(x)&=(x^{d}+\gamma)^{d^{2}}+\sum\limits_{0<j<d}(\gamma^{d-j}(x^{d}+\gamma)^{dj})+\gamma^{d}+\gamma\\
			&=x^{d^{3}}+\gamma^{d^{2}}+\gamma^{d}+\gamma+\sum\limits_{0<j<d^{2}}\gamma^{d^{2}-j}x^{dj}+\sum\limits_{0<j<d}\gamma^{dj}x^{d^{2}j}\\ &+\sum\limits_{0<j<d}\gamma^{dj+d-j}+\sum\limits_{0<j<d}\sum\limits_{0<k<d^{2}}\gamma^{d-j+dj-k}x^{dk}.
		\end{split}
	\end{equation}
	By Lemma (\ref{l2}), $ F(x) $ is a permutation polynomial over $ F_{2^{m}} $ if and only if $ \gamma $ is a $0-$linear structure of $ f $. Then, by Lemma (\ref{l3}) $ F^{-1}(x) $ can be expressed as, 
	\begin{equation}
		\begin{split}
			F^{-1}(x)&=(x+\gamma f(x^{d^{-1}}))^{d^{-1}}\\
			&=(x+\gamma f(x^{d^{3}}))^{3}\\
			&=x^{d^{3}}+\sum\limits_{0<j<d^{3}}(\gamma^{d^{3}-j}x^{j})f(x)+\gamma^{d^{3}}f(x).
		\end{split}
	\end{equation}
	On the other hand,
	\begin{equation}
		F^{-1}(x)=\begin{cases}
			x^{d^{3}} & \text{if } x\in F_{2^{m}}\backslash S_{1},\\
			x^{d^{3}}+\sum\limits_{0<j<d^{3}}(\gamma^{d^{3}-j}x^{j})+\gamma^{d^{3}} & \text{if } x\in S_{1}.
		\end{cases}
	\end{equation}
	Thus, $ F^{3}(x)=F^{-1}(x) $ if and only if $ \gamma^{d^{3}}+\gamma^{d^{2}}+\gamma^{d}+\gamma=0 $ and $ \sum\limits_{0<j<d^{2}}\gamma^{d^{2}-j}x^{dj}+\sum\limits_{0<j<d}\gamma^{dj}x^{d^{2}j} +\sum\limits_{0<j<d}\gamma^{dj+d-j}+\sum\limits_{0<j<d}\sum\limits_{0<k<d^{2}}\gamma^{d-j+dj-k}x^{dk}=
 \sum\limits_{0<j<d^{3}}\gamma^{d^{3}-j}x^{j}$.
\end{proof}
In a similar manner, the following result can be proved.
\begin{proposition}\label{c3}
	Let $ \gamma\in F_{2^{m}}^{*}$, $ d^{5}\equiv1(\mod2^{m}-1) $ and $ f $ be the Boolean function over $ F_{2^{m}} $ such that $ f(x)=f(x^{d}) $ for all $ x\in F_{2^{m}} $. Then, $F(x)=x^{d}+\gamma f(x)$ is a quintuple permutation polynomial over $ F_{2^{m}} $ if and only if
	\begin{enumerate}
		\item $\gamma$ is a $0-$linear structure of $ f(x) $;
		\item $\gamma^{d^{4}}+ \gamma^{d^{3}}+\gamma^{d^{2}}+\gamma^{d}+\gamma=0 $ and\\ $ \sum\limits_{0<l<d^{3}}\gamma^{d^{3}-l}x^{dl}+\sum\limits_{0<k<d^{2}}x^{d^{2}k}\gamma^{d^{2}-k}+\sum\limits_{0<k<d^{2}}\gamma^{dk+d^{2}-k}+
  \sum\limits_{0<j<d}\gamma^{d-j}x^{d^{3}j}
		+\\\sum\limits_{0<j<d}\gamma^{d^{2}j+d-j}+
  \sum\limits_{0<j<d}\gamma^{dj+d-j}+
  \sum\limits_{{0<j<d},{0<l<d^{3}}}x^{dl}\gamma^{d^{3}+d-l-j}
		+\\\sum\limits_{{0<j<d},{0<k<d^{2}}}x^{d^{2}k}\gamma^{d^{2}+d-j-k}+
		\sum\limits_{{0<j<d},{0<k<d^{2}}}\gamma^{dk+d^{2}-k+d-j}+\\
		\sum\limits_{{0<k<d^{2}},{0<l<d^{3}}}x^{dl}\gamma^{d^{3}+d^{2}-l-k}+
		\sum\limits_{{0<j<d},{0<k<d^{2}},{0<l<d^{3}}}x^{dl}\gamma^{d^{3}-l+d^{2}-k+d-j}=\\\sum\limits_{0<j<d^{4}}\gamma^{d^{4}-j}x^{j}$.
	\end{enumerate} 
\end{proposition}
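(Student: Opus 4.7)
The plan is to follow the same strategy used for Proposition \ref{c2}, just extended by one more composition. Since $d^5\equiv 1\pmod{2^m-1}$ we have $\gcd(d,2^m-1)=1$, so $G(x)=x^d$ is a permutation with $G^{-1}(x)=x^{d^4}$ and $G$ is quintuple. The assumption $f(x)=f(x^d)$ iterates to $f(x^{d^k})=f(x)$ for every $k$, which will be used throughout. The first direction, that $\gamma$ must be a $0$-linear structure of $f$, is obtained directly from Lemma \ref{l2} applied to the permutation assumption on $F$; this is condition (1). The remaining task is to translate $F^{(4)}(x)=F^{-1}(x)$ into the explicit algebraic identity of condition (2), invoking Theorem \ref{t4} with $n=5$.

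For the computation of $F^{(4)}(x)$, split according to whether $x\in S_1$ or not. Off $S_1$, the combination of $f\circ G=f$ and the $0$-linear structure forces $f$ to vanish along every iterate, giving $F^{(4)}(x)=G^{(4)}(x)=x^{d^4}$. On $S_1$, the same conditions keep $f$-values equal to $1$ at each stage, reducing the iteration to
\[
F^{(4)}(x)=\bigl(((x^d+\gamma)^d+\gamma)^d+\gamma\bigr)^d+\gamma.
\]
Then expand layer by layer using the characteristic-$2$ binomial formula $(a+b)^d=\sum_{0\le j\le d}\binom{d}{j}a^j b^{d-j}$. The first expansion introduces a single sum indexed by $j$, the second a double sum indexed by $(j,k)$, and the third a triple sum indexed by $(j,k,l)$, together with constant contributions $\gamma^{d^4}+\gamma^{d^3}+\gamma^{d^2}+\gamma^d+\gamma$ that accumulate at each layer. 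Grouping terms according to which subset of layers contributes a constant versus a non-trivial $x$-power yields precisely the single, double and triple sums listed on the left-hand side of the displayed identity in condition (2).

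For $F^{-1}$, apply Lemma \ref{l3}: $F^{-1}(x)=G^{-1}(x+\gamma f(G^{-1}(x)))$. Using $G^{-1}(x)=x^{d^4}$ and $f(x^{d^4})=f(x)$ collapses this to $F^{-1}(x)=(x+\gamma f(x))^{d^4}$, which off $S_1$ is $x^{d^4}$ (matching $F^{(4)}$ automatically) and on $S_1$ is $(x+\gamma)^{d^4}=x^{d^4}+\gamma^{d^4}+\sum_{0<j<d^4}\gamma^{d^4-j}x^j$. Equating $F^{(4)}(x)=F^{-1}(x)$ on $S_1$ and matching the constant parts yields $\gamma^{d^4}+\gamma^{d^3}+\gamma^{d^2}+\gamma^d+\gamma=0$, while matching the non-constant parts yields the long identity of condition (2). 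The converse direction is obtained by running the same equivalences backwards, exactly as in the closing paragraph of the proof of Proposition \ref{c2}.

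The main obstacle is purely combinatorial: keeping track of the indices produced by the four nested $d$-th powers and organizing them so that the left-hand side of (2) matches the right-hand single-sum $\sum_{0<j<d^4}\gamma^{d^4-j}x^j$. Each of the three binomial expansions multiplies the number of cross-terms, and the constant contributions from earlier layers re-enter as constants inside the next $d$-th power, producing the mixed-exponent terms of the form $\gamma^{d^k+d^j-\cdots}$ visible in the statement. A careful indexing convention, with $j,k,l$ recording the expansion index at the innermost, middle and outermost layer respectively, makes the matching transparent, but the bookkeeping is where all the work lies; the algebraic content beyond this bookkeeping is identical to the quadruple case already handled in Proposition \ref{c2}.
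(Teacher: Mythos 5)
Your proposal is correct and follows essentially the same route as the paper, which itself gives no separate argument for this proposition but states that it "can be proved in a similar manner" to the quadruple case of Proposition \ref{c2}: iterate $F$ four times splitting on $x\in S_{1}$ versus $x\notin S_{1}$, use Lemmas \ref{l2} and \ref{l3} to get $F^{-1}(x)=(x+\gamma f(x))^{d^{4}}$, and equate $F^{(4)}$ with $F^{-1}$ on $S_{1}$. The only (shared) imprecision is in the bookkeeping of the binomial expansions, where both you and the paper suppress the binomial coefficients $\binom{d}{j}$ modulo $2$, and your phrase about separating ``constant'' from ``non-constant'' parts is slightly loose since the long identity in condition (2) itself mixes both; neither point changes the substance of the argument.
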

We consider linear binomial and prove that it is a triple cycle permutation polynomial over $ F_{2^{m}} $.
\begin{theorem}\label{t5}
	Let $ F(x)=ax^{2^{i}}+bx^{2^{j}} $, $ a\in F_{2^{m}}^{*} $, and $ b\in F_{2^{m}}^{*} $, where $ i<j<m $. Then either of the following will hold;
	\begin{enumerate}
		\item For $ m $ relatively prime with both 2 and 3, $ F(x) $ can never be a triple cycle permutation polynomial.
		\item For $ m=3k $, or $ 2m=3k $, $ F(x) $ is a triple cycle if and only if $ j=i+k $ and
		
		either,
		\begin{equation}
			i=0\hspace{.2cm} and \hspace{.2cm}  a^{2}+b^{2}=1 \hspace{.2cm} and \hspace{.2cm} a^{2}b=0,  
		\end{equation}
		or, 
		\begin{equation}
			3i=2k,\hspace{.2cm}  a^{2^{2k}+1}=b^{2^{2k}+1},  ~a=a^{2^{i}}b^{2^{i}}(a^{2^{i}}+b^{2^{i}}) \hspace{.2cm} and\hspace{.2cm}  a^{2}+ab=1,
		\end{equation}
		
		or, 
		\begin{equation}
			3i=k,\hspace{.2cm}  a^{2^{k}+1}=b^{2^{k}+1}, ~ b=a^{2^{i}}b^{2^{i}}(a^{2^{i}}+b^{2^{i}}) \hspace{.2cm} and\hspace{.2cm}  b^{2}+ab=1. 
		\end{equation}
		\item For $ m=2k $, $ F(x) $ is a triple cycle if and only if $ i=j+k $ and 
		
		either,
		\begin{equation}
			i=0,\hspace{0.1cm}  a\in F_{2^{k}}, ~ b\in F_{2^{k}}, ~a^{2}+ab=1,\hspace{0.1cm} and\hspace{0.1cm}  ab+b^{3}=0,
		\end{equation}
		or,
		\begin{equation}
			j=0,\hspace{.2cm}  a\in F_{2^{k}} , b\in F_{2^{k}} \hspace{.2cm} and\hspace{.2cm}  ab+b^{2}=1.
		\end{equation}
	\end{enumerate}
\end{theorem}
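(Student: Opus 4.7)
The plan is to compute $F^{3}(x)$ directly and equate it to the identity map $x$. Since $F(x)=ax^{2^{i}}+bx^{2^{j}}$ is a $2$-linearized binomial over $F_{2^{m}}$, the Frobenius identity $(u+v)^{2^{r}}=u^{2^{r}}+v^{2^{r}}$ yields
\[
F^{3}(x)=a^{2^{2i}+2^{i}+1}x^{2^{3i}}+C_{1}x^{2^{2i+j}}+C_{2}x^{2^{i+2j}}+b^{2^{2j}+2^{j}+1}x^{2^{3j}},
\]
with $C_{1}=a^{2^{i}+1}b^{2^{2i}}+a^{2^{i+j}+1}b^{2^{i}}+a^{2^{i+j}+2^{j}}b$ and an analogous $C_{2}$ obtained by swapping the roles of $a$ and $b$. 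Because $x^{2^{m}}=x$, only the reduced exponents modulo $m$ matter, so $F^{3}(x)$ is supported on at most four classes $\{3i,\,2i+j,\,i+2j,\,3j\}\pmod{m}$. The condition $F^{3}=I$ is therefore equivalent to demanding (a) that at least one of these four integers be divisible by $m$ so that the identity term $x=x^{2^{0}}$ can appear, and (b) that the combined coefficient of $x^{2^{r}}$ equal $\delta_{r,0}$ for every $r\in\{0,1,\ldots,m-1\}$.

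The case analysis is driven by condition (a). With $0\le i<j<m$, I would argue as follows: if $\gcd(m,6)=1$, then none of $3i,\,3j,\,2i+j,\,i+2j$ can vanish modulo $m$ while keeping $i\ne j$, which proves part~(1). If $3\mid m$, writing $m=3k$ forces the feasible configurations to satisfy $j-i\equiv\pm k\pmod{m}$; the three subcases of part~(2) then correspond to which of $3i,\,3j,\,2i+j,\,i+2j$ is the one congruent to $0$ modulo $m$, namely $i=0$, $3i=2k$, and $3i=k$ respectively. If $2\mid m$, writing $m=2k$ the feasible configurations have $j-i=k$, and the two subcases of part~(3) arise depending on whether the vanishing exponent is $3i$ or $3j$. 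In each configuration, I would collapse coefficients modulo $m$ and enforce (b) to extract the listed algebraic conditions on $a$ and $b$.

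The main obstacle lies in step (b): once two or three of the four exponent classes coincide modulo $m$, the resulting coefficient equations become nonlinear in $a$ and $b$ with Frobenius twists, and simultaneous solvability pushes $a,b$ into a subfield such as $F_{2^{k}}$. The key simplifications will come from applying $\alpha^{2^{k}}=\alpha$ for $\alpha\in F_{2^{k}}$, which reduces twisted monomials such as $a^{2^{2k}+1}$ and $b^{2^{k}+1}$ to compact expressions and yields the clean relations $a^{2^{k}+1}=b^{2^{k}+1}$, $a^{2}+ab=1$, and the like. Verifying sufficiency is then a routine back-substitution that confirms the nonzero-class coefficients vanish identically and the zero-class coefficient equals $1$ under the stated hypotheses, thereby closing the proof.
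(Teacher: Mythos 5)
Your computational skeleton---expanding $F^{3}(x)$ by Frobenius-linearity, observing that its support lies in the four exponent classes $\{3i,\,2i+j,\,i+2j,\,3j\}\pmod m$, and then matching coefficients against the identity---is the same as the paper's, and your coefficient $a^{2^{2i}+2^{i}+1}$ is even more careful than the paper's misprinted $a^{2^{3i}+1}$. However, the logical step that drives the whole case analysis is wrong. You reduce everything to your condition (a), that at least one of the four exponents be divisible by $m$, and you prove part (1) by claiming that when $\gcd(m,6)=1$ none of $3i,3j,2i+j,i+2j$ can vanish modulo $m$. That claim is false: $i=0$ gives $3i\equiv 0$ for any $m$, and, say, $m=7$, $i=1$, $j=5$ gives $2i+j\equiv 0$. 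The correct mechanism for part (1) is different: since $a,b\neq 0$, the coefficients $a^{2^{2i}+2^{i}+1}$ and $b^{2^{2j}+2^{j}+1}$ of $x^{2^{3i}}$ and $x^{2^{3j}}$ are individually nonzero, so the only way to remove unwanted terms is to make exponent classes \emph{coincide} so that nonzero coefficients can cancel in characteristic $2$. The relevant question is therefore which of the six pairwise congruences among $3i,3j,2i+j,i+2j$ can hold modulo $m$; as in the paper's proof, $3i\equiv 3j$ forces $3\mid m$, while $3i\equiv i+2j$ or $3j\equiv 2i+j$ forces $2\mid m$, and the remaining three force $i\equiv j$. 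Hence for $\gcd(m,6)=1$ all four classes are distinct, at least one nonzero pure term sits on a nonzero class, and $F^{3}\neq I$.

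The same misdiagnosis propagates into parts (2) and (3): the constraint $j=i+k$ does not come from deciding which exponent is congruent to $0$, but from forcing two exponent classes to merge ($3i\equiv 3j$ when $m=3k$, or $3i\equiv i+2j$, respectively $3j\equiv 2i+j$, when $m=2k$); only after that merge does one additionally require a surviving class to be $0$ (giving the subcases $i=0$, $3i=2k$, $3i=k$, and so on) and impose the coefficient identities. Finally, you defer the entire coefficient extraction to ``routine back-substitution,'' but those computations are precisely where the relations $a^{2}+ab=1$, $b=a^{2^{i}}b^{2^{i}}(a^{2^{i}}+b^{2^{i}})$, $ab+b^{3}=0$ and the subfield conditions $a,b\in F_{2^{k}}$ actually arise, and where the paper spends most of its effort; the proposal as written does not yet establish the stated conditions.
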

\begin{proof}
	\begin{equation}
		\begin{split}
			F^{2}(x)&=a(ax^{2^{i}}+bx^{2^{j}})^{2^{i}}+b(ax^{2^{i}}+bx^{2^{j}})^{2^{j}}\\
			&=a^{2^{i}+1}x^{2^{2i}}+b^{2^{j}+1}x^{2^{2j}}+x^{2^{(i+j)}}(ab^{2^{i}}+a^{2^{j}}b)
		\end{split}
	\end{equation}
	\begin{equation}
		\begin{split}
			F^{3}(x)&=a^{2^{i}+1}(ax^{2^{i}}+bx^{2^{j}})^{2^{2i}}+b^{2^{j}+1}(ax^{2^{i}}+bx^{2^{j}})^{2^{2j}}+\\
   &(ax^{2^{i}}+bx^{2^{j}})^{2^{(i+j)}}(ab^{2^{i}}+a^{2^{j}}b)\\
			&=a^{2^{3i}+1}x^{2^{3i}}+b^{2^{3j}+1}x^{2^{3j}}+x^{2^{(2i+j)}}(a^{2^{i}+1}b^{2^{2i}}+a^{2^{(i+j)}+1}b^{2^{i}}+a^{2^{(i+2j)}}b)\\
			&+x^{2^{(i+2j)}}(a^{2^{2j}}b^{2^{j}+1}+ab^{2^{(2i+j)}}+a^{2^{j}}b^{2^{(i+j)}+1})	
		\end{split}	
	\end{equation}
	The exponents $ e $ of $ x $  belong to the set $\{3i,3j,2i+j,i+2j\}$. To get $ F^{3}(x)=x $ (noting that $ i\not=j $) it is required to remove three out of the four values from the set, which is explained in the following 6 cases.
	\begin{enumerate}
		\item  $ 3i\equiv3j(\mod m) $ implies $ 3i=3j+m $, which is impossible if $ 3\not|m $. On the other hand if $  m=3k $ then $ i=j+k $.
		\item $ 3i\equiv2i+j(\mod m) $ implies $ 3i=2i+j+m $ and hence $ i=j+m $ which is impossible. 
		\item $ 3i\equiv2j+i(\mod m) $ implies $ 3i=2j+i+m $ and hence $ 2i=2j+m $ which is impossible for $2\not|m$. On the other hand if $ m=2k $ then $ 2i=2j+2k $ implying  $ i=j+k $.
		\item $ 3j\equiv2i+j(\mod m) $ implies $ 3j=2i+j+m $ which is impossible if $2\not|m$. On the other hand if $ m=2k $ then $ 2j=2i+2k $ implying $ j=i+k $.
		\item $ 3j\equiv i+2j(\mod m) $ implies  $ 3j=2j+i+m $ implying $ j=i+m $ which is impossible.
		\item $ 2i+j\equiv i+2j(\mod m) $ implies $ 2i+j=i+2j+m $ and hence $ i=j+m $ which is impossible.
	\end{enumerate}
	When $ i=j+k $ and $ m=3k $ the form of  $ F^{3}(x) $ will be
	\begin{equation}
		\begin{split}
			F^{3}(x)&=(a^{2^{3i}+1}+b^{2^{3j}+1})x^{2^{3i}}+(a^{2^{i}+1}b^{2^{2i}}+a^{2^{i+j}+1}b^{2^{i}}+a^{2^{i+2j}}b)x^{2^{2i+j}}+\\
			&x^{2^{i+2j}}(a^{2^{2j}}b^{2^{j}+1}+ab^{2^{2i+j}}+a^{2^{j}}b^{2^{i+j}+1}).
		\end{split}
	\end{equation}
	The case  $ 3i=m $ possible only when $ i=0 $ and in this case, $ F^{3}(x)=x $ for all $ x $ if and only if $ a^{2}+b^{2^{3k}+1}=a^{2}+b^{2}=1$, $ a^{2}b+a^{2^{k}+1}b+a^{2^{2k}}b=0 $ and $ b^{2^{k}+1}a^{2^{2k}}+a^{2^{k}}b^{2^{k}+1}+ab^{2^{k}}=0 $. This is possible if $ a^{2}+b^{2}=1  $ and $ a^{2}b=0   $. Otherwise, we must have either $ 2i+j=m $ or $ i+2j=m $. If $ 2i+j=m $, we have  $ 3i=2k $ as $ j=k+i $ and in this case, $ F^{3}(x)=x $ for all $ x $ if and only if 
	\begin{equation}
		a^{2^{2k}+1}+b^{2^{2k}+1}=0,\hspace{.1cm} a^{2^{i}+1}b^{2^{2i}}+a^{2^{2i}+1}b^{2^{i}}+ab=1\hspace{.2cm} and 
	\end{equation}
	\begin{equation}
		b^{2^{i}+1}a^{2^{2i}}+a^{2^{i}}b^{2^{2i}+1}+ab=0.
	\end{equation}
	This is possible if $ a^{2^{2m}+1}=b^{2^{2m}+1} $, $ a=a^{2^{i}}b^{2^{i}}(a^{2^{i}}+b^{2^{i}}) $ and $ a^{2}+ab=1 $.

	Similarly, if $ i+2j=m $, we have  $ 3i=k $ as $ j=i+k $ and in this case, $ F^{3}(x)=x $ for all $ x $ if and only if
	\begin{equation}
		a^{2^{k}+1}+b^{2^{k}+1}=0,\hspace{.1cm} a^{2^{i}+1}b^{2^{2i}}+a^{2^{2i}+1}b^{2^{i}}+ab=0\hspace{.2cm} and 
	\end{equation}
	\begin{equation}
		b^{2^{i}+1}a^{2^{2i}}+a^{2^{i}}b^{2^{2i}+1}+ab^{2^{2m}}=1.
	\end{equation}
	This is possible if $ a^{2^{k}+1}=b^{2^{k}+1} $, $ b=a^{2^{i}}b^{2^{i}}(a^{2^{i}}+b^{2^{i}}) $ and $ b^{2}+ab=1 $.

	Now it remains to look at the $3^{rd}$ and $4^{th}$ cases. In both these cases  $ F(x) $ is a triple cycle permutation polynomial and when $ i=j+k $ with $ m=2k $, we have 
	\begin{equation}
		\begin{split}
			F^{3}(x)&=(a^{2^{3i}+1}+a^{2^{2j}}b^{2^{j}+1}+ab^{2^{2i+j}}+a^{2^{j}}b^{2^{i+j}+1})x^{2^{3i}}+\\
			&(b^{2^{3i}+1}+a^{2^{i}+1}b^{2^{2i}}+a^{2^{i+j}+1}b^{2^{i}}+a^{2^{i+2j}}b)x^{2^{3j}}.
		\end{split}
	\end{equation}
	The case $ 3i=m $ is possible only when $ i=0 $. 
	
	Consider two terms
	\begin{equation}\label{e1}
		a^{2}+b^{2^{k}+1}a+ab^{2^{k}}+a^{2^{k}}b^{2^{k}+1}\hspace{.2cm}and
	\end{equation}
	\begin{equation}\label{e2}
		b^{2^{k}+2}+a^{2}b+a^{2^{k}+1}b+ab,
	\end{equation}
	and if $ a\in F_{2^{k}} $, and $ b\in F_{2^{k}} $ Equations (\ref{e1}) and (\ref{e2}) reduce to, 
	\begin{equation}
		a(a+b^{2^{k}})+b(ab^{2^{k}}+a^{2^{k}}b^{2^{k}})=a^{2}+ab,
	\end{equation}
	\begin{equation}
		b(b^{2^{k}+1}+a^{2}+a^{2^{k}+1}+a)=ab+b^{3}.
	\end{equation}
	
	In this case, $ F^{3}(x)=x $ for all $ x $ if and only if $ a\in F_{2^{k}} $, $ b\in F_{2^{k}} $, $ a^{2}+ab=1 $, and $ ab+b^{3}=0 $.
	Otherwise, we must have $ 3j=m $ and is only when $ j=0 $.
	
	Consider two terms 
	\begin{equation}\label{e3}
		a^{2^{k}+2}+b^{2}a+ab+ab^{2^{k}+1}\hspace{.2cm} and
	\end{equation}
	\begin{equation}\label{e4}
		b^{2}+a^{2^{k}+1}b+a^{2^{k}+1}b^{2^{k}}+a^{2^{k}}b.
	\end{equation}
	and if $ a\in F_{2^{k}} $, and $ b\in F_{2^{k}} $ Equations (\ref{e3}) and (\ref{e4}) reduce to
	\begin{equation}
		a(a^{2^{k}+1}+b^{2}+b+b^{2^{k}+1})=0,
	\end{equation}
	\begin{equation}
		b^{2}+a^{2}b+a^{2}b+ab=ab+b^{2}.
	\end{equation}
	In this case, $ F^{3}(x)=x $ for all $ x $ if and only if $ a\in F_{2^{k}} $, $ b\in F_{2^{k}} $ and $ ab+b^{2}=1 $.
	Similarly, for  $ 3j\equiv2i+j(\mod m) $, we get same condition for $ F(x) $ to be triple cycle permutation polynomial over $ F_{2^{m}} $.
\end{proof}
\begin{corollary}
	Let $ m=2k $ and $ F(x)=ax^{2^{k}}+bx $. Then, $ F(x) $ is a triple cycle permutation polynomial over $ F_{2^{m}} $ for all non zero $ a$ and $b $ such that, 
	\begin{equation}
		a,b\in F_{2^{k}},\hspace{0.1cm} and \hspace{0.1cm} b^{2}=ab+1.	 
	\end{equation}
\end{corollary}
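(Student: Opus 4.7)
The plan is to apply Theorem \ref{t5} directly, with the corollary emerging as a specialization of its Case 3. First, I would match $F(x) = ax^{2^{k}} + bx$ to the generic binomial form $\alpha x^{2^{I}} + \beta x^{2^{J}}$ of the theorem by taking $I = k$, $J = 0$, $\alpha = a$, $\beta = b$; note that $m = 2k$ places us exactly in Case 3, so none of the other cases of Theorem \ref{t5} need to be considered.

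Next, I would verify the structural condition $I = J + k$ required by Case 3, which is immediate since $k = 0 + k$. With the case identified, I would then invoke the ``$J = 0$'' sub-alternative of Case 3, which asserts that $F(x)$ is a triple-cycle permutation polynomial if and only if $a, b \in F_{2^{k}}$ and $ab + b^{2} = 1$. In characteristic $2$ the relation $ab + b^{2} = 1$ is identical to $b^{2} = ab + 1$, which is precisely the hypothesis of the corollary.

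Finally, I would confirm that the remaining hypotheses match up cleanly: the assumption $a, b \neq 0$ in the corollary aligns with the theorem's requirement $a, b \in F_{2^{m}}^{*}$, and the binomial is already presented in the required shape, so no rescaling or substitution is needed. The main ``obstacle'' is purely notational, namely reconciling the indexing conventions $i < j$ in the theorem with the assignment $I = k > J = 0$ in the corollary; once one accepts that Case 3 of Theorem \ref{t5} is symmetric in the roles of the two monomial exponents (the ``$i = 0$'' and ``$j = 0$'' sub-cases make this explicit), the specialization is unambiguous. Beyond that, no new computation is needed, since the heavy lifting, the expansion of $F^{3}(x)$ and the case analysis of the exponent multiset $\{3i, 3j, 2i+j, i+2j\}$ modulo $m$, has already been carried out in the proof of Theorem \ref{t5}.
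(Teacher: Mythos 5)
Your proposal is correct and follows essentially the same route as the paper, whose proof is the one-line observation that the corollary reduces to the third case of Theorem \ref{t5} with $j=0$ and $i=k$, so that the condition $ab+b^{2}=1$ becomes $b^{2}=ab+1$ in characteristic $2$. Your extra remark about reconciling the convention $i<j$ in the theorem's statement with the assignment $i=k>j=0$ is a reasonable point the paper glosses over, but it does not change the argument.
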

\begin{proof}
	It reduces to the $3^{rd}$ case of Theorem (\ref{t5}) with $ j=0 $ gives $ i=k $. 
\end{proof}

One can notice that if $y=x^{2^i}$, then $ F(x)=ax^{2^{i}}+bx^{2^{j}} $ is equivalent $ G(y)=ay+by^{2^{j-i}} $. In [\cite{liu2019triple},Theorem 20], authors proved that $ G(y)=ay+by^{2^{j-i}} $ is a permutation polynomial over $ \mathbb{F}_{2^{m}} $.

\section{Conclusion}\label{sec13}
In this paper, we constructed $ n-$cycle permutation polynomial of a linearised polynomial over $ F_{2^{m}} $. Using the Lemmas (\ref{l1}) and (\ref{l2}) found count for number of monomial $ n-$cycle permutation polynomials over $ F_{2^{m}} $. Additionally,  we constructed $ n-$cycle permutation polynomial for the Kasami power function and Gold power function over $ F_{2^{m}} $.  We  then constructed $ n-$cycle permutation polynomial of the forms $ L(x)+\gamma h(Tr_{q^{m}/q}(x))$, and $  G(x)+\gamma f(x) $, where $ f(x) $ is a Boolean function over $ F_{2^{m}} $. Furthermore, we proved that $ x^{d}+\gamma f(x) $ is quadruple and quintuple with some appropriate conditions over $ F_{2^{m}} $. Finally, we proved that  linear binomial $ ax^{2^{i}}+bx^{2^{j}} $ is a triple cycle permutation polynomial over $ F_{2^{m}} $. In the future, it would be very interesting to obtain more $n-$cycle permutation polynomials of various types using the techniques described in this paper.

\end{document}